%
%
%

\documentclass[11pt]{article}
\usepackage[a4paper]{anysize}\marginsize{3.1cm}{3.1cm}{0.9cm}{3cm}
   \pdfpagewidth=\paperwidth \pdfpageheight=\paperheight
\usepackage[latin1]{inputenc}
\usepackage{amssymb,amsmath}
\usepackage{graphicx}
\usepackage{enumerate}
\thispagestyle{empty}


\parindent=9mm
\topsep=1\baselineskip plus 6pt minus 1pt \partopsep=0cm  

\makeatletter
\renewcommand\@seccntformat[1]{\csname the#1\endcsname.\enspace}
\makeatother
\renewenvironment{abstract}{\begin{quote}\hrulefill\par\footnotesize\textbf{\abstractname.}}{\par\vskip-0.5\baselineskip\hrulefill\end{quote}}


\newtheorem{introtheorem}{Theorem}  
\newtheorem{introcor}[introtheorem]{Corollary}

\newtheorem{thm}{Theorem}[section]
\newtheorem{theorem}[thm]{Theorem}

\newtheorem{proposition}[thm]{Proposition}
\newtheorem{cor}[thm]{Corollary}

\newcommand\mkrmthm[2]{\newenvironment{#1}{\begin{#2}\rm}{\end{#2}}}
 \mkrmthm{definition}{tdefinition}
         \mkrmthm{remark}{tremark}
       \mkrmthm{example}{texample}
     \mkrmthm{question}{tquestion}

\newenvironment{proof}[1][Proof]{\trivlist\item[\hskip\labelsep{\textit{#1.}}]}{\hspace*{\fill}$\Box$\endtrivlist}


\renewcommand\emptyset{\varnothing}  
\renewcommand\ge{\geqslant}  
\renewcommand\le{\leqslant}  

\renewcommand\phi{\varphi}

\newcommand\grant[1]{{\renewcommand\thefootnote{}\footnotetext{#1.}}}
\newcommand\keywords[1]{{\renewcommand\thefootnote{}\footnotetext{\textit{Keywords:} #1.}}}
\newcommand\subclass[1]{{\renewcommand\thefootnote{}\footnotetext{\textit{Mathematics Subject Classification (2010):} #1.}}}


\newcommand\C{\mathbb C}
\newcommand\Q{\mathbb Q}

\newcommand\R{\mathbb R}
\newcommand\Z{\mathbb Z}
\renewcommand\P{\mathbb P}
\renewcommand\O{\mathcal O}
\newcommand\T{\mathcal T}
\renewcommand\v{ {\bf v}}
\newcommand\w{ {\bf w}}
\newcommand\uu{ {\bf u}}

\newcommand\be{$$\everymath{\displaystyle}\renewcommand\arraystretch{1.2}\begin{array}{rcl}}
\newcommand\ee{\end{array}$$}

\newcommand\compact{\itemsep=0cm \parskip=0cm}

\newcommand\set[1]{\left\{#1\right\}}

\renewcommand\to{\longrightarrow}




\newcommand\newop[2]{\newcommand#1{\mathop{\rm #2}\nolimits}}

\newop\End{End}
\newop\NE{\overline{NE}}
\newop\Amp{Amp}
\newop\Nef{Nef}
\newop\vol{vol}
\newop\pos{pos}
\newop\Eff{\overline{Eff}}
\newop\mBig{Big}

\newop\NS{NS}

\newop\id{id}
\newop\mult{mult}

\makeatletter
\newcommand\opteq[1]{\mathrel{\mathpalette\opt@eq{#1}}}
\newcommand{\opt@eq}[2]{%
  \begingroup
  \sbox\z@{$#1#2$}%
  \sbox\tw@{\resizebox{!}{.5\ht\z@}{$\m@th#1($}}%
  \nonscript\hskip-\wd\tw@
  \mkern1mu
  \raisebox{-.35\ht\z@}[0pt][0pt]{\resizebox{!}{.5\ht\z@}{$\m@th#1($}}%
  \mkern-1mu
  {#2}%
  \mkern-1mu
  \raisebox{-.35\ht\z@}[0pt][0pt]{\resizebox{!}{.5\ht\z@}{$\m@th#1)$}}%
  \mkern1mu
  \nonscript\hskip-\wd\tw@
  \endgroup
}
\makeatother

\newcommand{\geoq}{\opteq{\geq}}



\begin{document}

   \title{On exterior powers of the tangent bundle on toric varieties}
   \author{ David Schmitz}
   \date{\normalsize \today}
   \maketitle
   \thispagestyle{empty}
   \grant{The author was supported by DFG grant PE 305/13-1, ``Zur Positivit\" at in der komplexen Geometrie''}
   \keywords{Tangent bundle, Toric variety, Positivity of vector bundles}
   \subclass{14J60, 14M25}

\begin{abstract}
  We study the positivity of exterior powers of the tangent sheaf on toric varieties in order to 
  generalize results by Campana and Peternell about 3-folds with nef second exterior power 
  of the tangent bundle. Using the theory of equivariant vector bundles and the toric MMP, we 
  establish in the smooth case a criterion for the positivity of $\Lambda^m\T_X$ in terms of wall 
  relations. As an application, we  classify smooth  toric varieties of arbitrary dimension $n\ge3$ with $
  \Lambda^2\T_X$ nef and those  of dimension $n\ge 4$ with $\Lambda^3\T_X$ ample.   
\end{abstract}



\section*{Introduction}
The study of consequences of the positivity of the tangent bundle $\T_X$  
 of a projective manifold $X$  and of related bundles has been long and fruitful. The most famous instance is  Mori's result (\cite[Theorem 8]{M}) stating that ampleness of $\T_X$ forces an $n$-dimensional projective manifold $X$ to be isomorphic to $\P^n$. Campana and Peternell in \cite{CP91} weakened the 
 assumption of ampleness to nefness of $\T_X$, meaning that the tautological line bundle $
 \O_{\P(\T_X)}(1)$ is nef on $\P(\T_X)$. They classified the 3-folds with this property and  formulated 
 their well-known conjecture predicting that any Fano manifold with nef tangent bundle should be 
 rational homogeneous. This prediction is in accordance with Mok's earlier classification of compact 
 manifolds with a Kähler metric with non-negative bisectional curvature, yet this condition is much 
 stronger than nefness of $\T_X$.  We refer the reader to  \cite{Wis} for a survey of the progress made 
 on the Campana-Peternell conjecture. 
 
Instead of the tangent bundle itself, it is natural to study its exterior powers. The leading example 
 being $\Lambda^n\T_X=-K_X$, whose nefness forces the Kodaira dimension of $X$ to be 
at most 0 but yields very little in terms of classification. On the other hand, Campana and Peternell found the following classification of projective 3-folds with $\Lambda^2\T_X$ nef.
\begin{introtheorem}[\cite{CP94}]
	Let $X$ be a  projective 3-fold with $\Lambda^2\T_X$ nef. Then either 
	$\T_X$ is nef or $X$ is one of the following.
	\begin{enumerate}[a)]
		\item $X$ is the blowup of $\P^3$ in one point, or
		\item $X$ is a Fano 3-fold of index 2 and $b_2=1$, but not a double cover of the Veronese cone.  
	\end{enumerate}	
\end{introtheorem}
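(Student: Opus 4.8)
The plan is to reformulate the hypothesis and then run the minimal model program, reading off from each extremal contraction whether it is compatible with $\Lambda^2\T_X$ being nef. On a $3$-fold one has the canonical isomorphism $\Lambda^2\T_X\cong\Omega^1_X\otimes(-K_X)$, and taking determinants gives $\det\Lambda^2\T_X=-2K_X$; hence nefness of $\Lambda^2\T_X$ already forces $-K_X$ to be nef. The second, more flexible consequence is a splitting criterion along rational curves: for every nonconstant $f\colon\P^1\to X$, writing $f^*\T_X=\O(a_1)\oplus\O(a_2)\oplus\O(a_3)$ with $a_1\ge a_2\ge a_3$, nefness of $f^*\Lambda^2\T_X=\O(a_1+a_2)\oplus\O(a_1+a_3)\oplus\O(a_2+a_3)$ is equivalent to the single inequality $a_2+a_3\ge0$. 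Since a smooth (hence immersed) rational curve $\ell\subset X$ satisfies $a_1\ge2$, this yields $-K_X\cdot\ell\ge2$ for every such curve.

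First I would dispose of the case $K_X\equiv0$: then $-K_X$ is numerically trivial, so nefness of $\Lambda^2\T_X=\Omega^1_X\otimes(-K_X)$ gives that $\Omega^1_X$ is nef, and as $c_1(\Omega^1_X)=K_X\equiv0$ the bundle is numerically flat by the theorem of Demailly--Peternell--Schneider; hence $\T_X=(\Omega^1_X)^\vee$ is nef as well. I may therefore assume $-K_X$ is nef but not numerically trivial, so that $K_X$ is not pseudoeffective, $X$ is uniruled, and Mori theory applies. Fix an extremal ray $R$ with contraction $\phi_R\colon X\to Y$. The criterion $-K_X\cdot\ell\ge2$ on smooth rational curves, together with a direct computation of splitting types on the exceptional and fibral curves, excludes every contraction producing a smooth rational curve of anticanonical degree $1$: this kills the divisorial types E1, E3, E4, E5 of Mori's classification (blowup of a smooth curve, and the contractions to a node or to a quotient singularity) and every conic bundle with a degenerate fibre, each of which contributes such a line. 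The surviving local pictures are the blowup of a smooth point (type E2, where the exceptional line has splitting type $(2,1,-1)$, so $a_2+a_3=0$ and the condition holds on the nose), smooth $\P^1$-bundles over surfaces, del Pezzo fibrations over curves, and the Fano case $Y=\mathrm{pt}$.

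Next I would carry out the birational and fibre-type analysis. If $X$ admits an E2 contraction $\phi_R\colon X\to Y$, I would compare $\T_X$ with $\phi_R^*\T_Y$ to show that $\Lambda^2\T_Y$ is again nef and $Y$ is a smooth Fano $3$-fold; classifying $Y$ and checking which of them admits a point blowup preserving the splitting criterion should force $Y=\P^3$, producing case a), and a direct check confirms that $\mathrm{Bl}_{\mathrm{pt}}\P^3$ does have $\Lambda^2\T_X$ nef while $\T_X$ is not nef, so this case is genuine. For the remaining fibre-type contractions with $\rho(X)\ge2$ I would analyse $\T_X$ through the relative tangent sequence $0\to\T_{X/Y}\to\T_X\to\phi_R^*\T_Y\to0$ on fibres and on multisections; the expected upshot is that the splitting criterion together with nefness of $-K_X$ forces $\T_X$ itself to be nef, recovering exactly the $\P^1$-bundles and del Pezzo fibrations of the Campana--Peternell list of $3$-folds with nef tangent bundle.

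The crux is the Picard-number-one case: $\rho(X)=1$ forces $-K_X$ ample, so $X$ is a Fano $3$-fold, and I would argue by the Fano index $r_X$. For $r_X\ge3$ one has $X=\P^3$ or $X=Q^3$, both with $\T_X$ nef; for $r_X=1$ the lines satisfy $-K_X\cdot\ell=1$, so $a_2+a_3=1-a_1\le-1$ and $\Lambda^2\T_X$ is not nef, excluding these. The decisive and most delicate step is $r_X=2$, the del Pezzo $3$-folds: here I would study the splitting type of $\T_X$ along the family of lines, using the variety of minimal rational tangents together with a careful inspection of the possibly degenerate or jumping members of the family, to decide exactly when $a_2+a_3\ge0$ holds everywhere. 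This is where the double cover of the Veronese cone must be singled out, its special lines being the ones on which the criterion fails, whereas the other index-$2$, $b_2=1$ Fano $3$-folds satisfy it, yielding case b). I expect this line analysis on the boundary del Pezzo $3$-folds, rather than the Mori-theoretic bookkeeping, to be the main obstacle.
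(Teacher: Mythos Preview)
This statement is not proved in the paper at all: it is quoted from \cite{CP94} as background motivation in the Introduction, and no argument for it is given anywhere in the text. The paper's own contribution is the \emph{toric} analogue in arbitrary dimension (the later introtheorems, proved in the body as Theorems~\ref{th:L2} and~\ref{th:class}), established by methods specific to toric geometry---Klyachko filtrations, the splitting type of $\T_X|_C$ read off from wall relations, and the Fujino--Sato length bound---none of which apply to the general projective $3$-folds of the cited theorem. So there is no ``paper's own proof'' to compare your proposal against.

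That said, your sketch is a plausible outline of the original Campana--Peternell argument: the reformulation $\Lambda^2\T_X\cong\Omega^1_X(-K_X)$, the splitting criterion $a_2+a_3\ge0$ on rational curves, exclusion of contraction types by anticanonical degree, and the index analysis in the $\rho=1$ Fano case are all the right ingredients. If your intent was to reconstruct \cite{CP94}, the shape is correct, though several of the steps you label ``I would show'' or ``the expected upshot'' (notably the classification of which $Y$ admit a point blowup preserving nefness, the fibre-type case forcing $\T_X$ nef, and the line analysis on index-$2$ del Pezzo $3$-folds) are genuine work in the original paper and are not substantiated here. But none of this can be checked against the present paper, which simply does not address the non-toric statement.
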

In the present note, we investigate positivity of arbitrary exterior powers of the tangent sheaf in 
the case of toric varieties. On the one hand, the focus on toric varieties is a substantial restriction of the 
subject matter. For example, in the above theorem, none of the 3-folds of case b) are toric. On the 
other hand,  the theory of toric varieties provides us with sufficient machinery to handle arbitrary 
dimensions. 

A particularly pleasant feature of (complete) toric varieties for our investigation is the fact that nefness and ampleness of an equivariant vector bundle $\mathcal F$ test on torus-invariant curves as has been shown in \cite{hmp}. As such a curve is isomorphic to $\P^1$, the restriction of $\mathcal F$ decomposes as a sum of line bundles, and the positivity of $\mathcal F$ is determined by the splitting types
$$
	\mathcal F|_C \cong \O_{\P^1}(a_1) \oplus \O_{\P^1}(a_2)\oplus\dots\oplus  \O_{\P^1}(a_{r})
$$
for invariant curves $C$. By utilizing the  \emph{Klyachko filtration} of $\mathcal F$, in \cite{hmp} the 
authors also provide an approach to determining these splitting types, which we will describe in Section
 \ref{s:equivariantBundles}.

By applying the above approach to the tangent bundle, we establish the following close relationship between positivity questions and the types of contractions arising in the toric minimal model program. The splitting type of $\T_X$ restricted to an invariant curve $C$ turns out to be given by the coefficients of the \emph{wall relation} $b_1\v_1+\dots+b_{n-1}\v_{n-1}+\v_n +\v_{n+1}=0$ corresponding to $C$. Here $\v_1,\dots,\v_{n-1}$ are the primitive generators of the rays spanning the wall $\tau\in\Sigma(n-1)$ corresponding to $C$ and $\v_n$ and $\v_{n+1}$ are the primitive generators of 
those rays which together with $\tau$ respectively span those two maximal cones $\sigma,\sigma'\in\Sigma(n)$, which intersect in $\tau$. Fixing this notation, the first main result of this note are the following criteria for positivity of exterior powers of $\T_X$.

\begin{introtheorem}
	 Let $X_\Sigma$ be a smooth toric variety of dimension $n$. Then for $1\le m\le n-1$ the exterior power 
	 $\Lambda^m\T_X$ is ample (nef) if and only if
	 for any $\tau\in\Sigma(n-1)$ with wall relation $b_1\v_1+\dots+b_{n-1}\v_{n-1}+\v_n +\v_{n+1}=0$ the inequalities
	 \begin{align*}
	 	b_{i_1}+b_{i_2}+\dots + b_{i_m} &\geoq 0\\
		2 + b_{j_1}+b_{j_2}+\dots + b_{j_{m-1}} &\geoq 0
	 \end{align*}
	 hold for each $1\le i_1<\dots<i_m\le n$ and $1\le j_1<\dots<i_{m-1} \le n$.\\
	 Similarly,  $\Lambda^n\T_X=-K_X$ is ample (nef) if and only if for any $\tau\in\Sigma(n-1)$ the 	coefficients  in the corresponding wall relation satisfy
	 $$
	 	2 + b_{1}+b_{2}+\dots + b_{{n-1}} \geoq 0.
	 $$
\end{introtheorem}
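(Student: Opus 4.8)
The plan is to reduce the whole statement to a computation on torus-invariant curves and then to pure linear algebra on $\P^1$. By the testing criterion of \cite{hmp}, the equivariant bundle $\Lambda^m\T_X$ is nef (ample) precisely when its restriction to every invariant curve $C=V(\tau)$, $\tau\in\Sigma(n-1)$, is nef (ample). Each such $C$ is isomorphic to $\P^1$, so the restriction splits into line bundles, and a bundle $\bigoplus_j\O_{\P^1}(a_j)$ on $\P^1$ is nef exactly when all $a_j\ge 0$ and ample exactly when all $a_j\ge 1$. Since exterior powers commute with restriction and for a split bundle
$$\Lambda^m\Bigl(\bigoplus_{i}L_i\Bigr)\cong\bigoplus_{|S|=m}\bigotimes_{i\in S}L_i,$$
the entire theorem follows once the splitting type of $\T_X|_C$ is known: I then only have to read off the degrees of the rank-one summands of $\Lambda^m\T_X|_C$.

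The geometric heart of the argument is therefore to establish that, for $C=V(\tau)$ with wall relation $b_1\v_1+\dots+b_{n-1}\v_{n-1}+\v_n+\v_{n+1}=0$, one has
$$\T_X|_C\cong\O_{\P^1}(2)\oplus\bigoplus_{i=1}^{n-1}\O_{\P^1}(b_i).$$
The summand $\O_{\P^1}(2)=\T_C$ is the tangent direction along $C$, and the remaining summands form the normal bundle $N_{C/X}$. To compute these degrees I would work in the two smooth affine charts $U_\sigma,U_{\sigma'}$ attached to the maximal cones $\sigma=\langle\tau,\v_n\rangle$ and $\sigma'=\langle\tau,\v_{n+1}\rangle$ meeting along $\tau$. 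Smoothness makes $\{\v_1,\dots,\v_{n-1},\v_n\}$ and $\{\v_1,\dots,\v_{n-1},\v_{n+1}\}$ lattice bases, and $C$ is covered by the two torus-fixed points these cones determine. Comparing the two equivariant trivializations of $\T_X$ across the overlap shows that the normal direction indexed by $\v_i$ twists by exactly the coefficient $b_i$; equivalently, $\deg N_{C/X,i}=D_{\v_i}\cdot C=b_i$ by the toric intersection formula, while the fixed coefficients $1$ of $\v_n,\v_{n+1}$ account for $\deg\T_C=1+1=2$. This is precisely the splitting type produced by the Klyachko filtration of $\T_X$ along $\tau$ recalled in Section \ref{s:equivariantBundles}.

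With the splitting type in hand the rest is bookkeeping. Writing $\T_X|_C=L_0\oplus L_1\oplus\dots\oplus L_{n-1}$ with $\deg L_0=2$ and $\deg L_i=b_i$ for $i\ge 1$, every rank-one summand of $\Lambda^m\T_X|_C$ is indexed by an $m$-element subset $S\subseteq\{0,1,\dots,n-1\}$ and has degree $\sum_{i\in S}\deg L_i$. Subsets with $0\notin S$ produce the degrees $b_{i_1}+\dots+b_{i_m}$, while subsets with $0\in S$ produce $2+b_{j_1}+\dots+b_{j_{m-1}}$. Demanding that each such degree be $\ge 0$ (resp.\ $\ge 1$) yields exactly the two displayed families of inequalities, and ranging over all $\tau\in\Sigma(n-1)$ gives the criterion for $\Lambda^m\T_X$. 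The case $m=n$ is the instance in which $\{0,1,\dots,n-1\}$ is the only $m$-subset, so $\Lambda^n\T_X=\det\T_X=-K_X$ restricts to the single line bundle of degree $2+b_1+\dots+b_{n-1}$, which is nef (ample) on $C$ under the remaining inequality alone.

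The \emph{main obstacle} is the middle step: rigorously identifying the normal-bundle degrees with the wall-relation coefficients. Everything around it is formal, but this identification demands honest control of the equivariant structure of $\T_X$ across the two charts, either through an explicit transition-function computation in the lattice bases adapted to $\sigma$ and $\sigma'$, or through the Klyachko-filtration description of splitting types. Particular care is needed with signs and with the normalization forcing the coefficients of $\v_n$ and $\v_{n+1}$ to equal $1$, since this normalization is exactly what pins down the $b_i$ as the genuine normal twists rather than some shear of them.
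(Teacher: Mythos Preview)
Your proposal is correct and follows essentially the same route as the paper: reduce via \cite{hmp} to invariant curves, establish the splitting $\T_X|_C\cong\O_{\P^1}(2)\oplus\bigoplus_{i=1}^{n-1}\O_{\P^1}(b_i)$ (this is the paper's Proposition preceding Theorem~\ref{Th:Criterion}), and then read off the degrees of the rank-one summands of $\Lambda^m\T_X|_C$. The only minor difference is one of emphasis: the paper carries out the splitting computation explicitly through the Klyachko filtration and the character pairing $(\w_i,\w_i+a_i\w_n)$ in the dual bases of $\sigma$ and $\sigma'$, whereas you also invoke the normal-bundle interpretation and the toric intersection formula $D_{\rho_i}\cdot V(\tau)=b_i$ from Remark~\ref{Rmk:smooth}; these are equivalent descriptions of the same transition data, so nothing substantive separates the two arguments.
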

Let us note here that this theorem has the following rather surprising consequence.
\begin{introcor}
	Let $X$ be a smooth toric variety of dimension $n$. If $\Lambda^m\T_X$ is nef for some $1\le m<n$, then $X$ is Fano.
\end{introcor}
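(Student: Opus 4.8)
The plan is to read the corollary off the theorem by a short combinatorial manipulation of the wall-relation coefficients. Being Fano means exactly that $-K_X=\Lambda^n\T_X$ is ample, and by the last assertion of the theorem (in its ample form) this holds precisely when, for every wall $\tau\in\Sigma(n-1)$ with wall relation $b_1\v_1+\dots+b_{n-1}\v_{n-1}+\v_n+\v_{n+1}=0$, the strict inequality $2+b_1+\dots+b_{n-1}>0$ is satisfied. Hence it suffices to establish the (a priori weaker, non-strict) bound $\sum_{i=1}^{n-1}b_i\ge 0$ at every wall: the extra summand $2$ will then upgrade this to the required strict inequality automatically.

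To obtain that bound I would feed in the hypothesis. Since $\Lambda^m\T_X$ is nef for some $1\le m<n$, the theorem supplies, for each wall and each $m$-element subset $\{i_1,\dots,i_m\}\subseteq\{1,\dots,n-1\}$, the inequality $b_{i_1}+\dots+b_{i_m}\ge 0$; note that only this first family of inequalities is needed, not the ones involving the summand $2$. Fixing a wall and summing over all such subsets $S$, and observing that each index occurs in exactly $\binom{n-2}{m-1}$ of them, gives
\[
0\le\sum_{|S|=m}\ \sum_{i\in S}b_i=\binom{n-2}{m-1}\sum_{i=1}^{n-1}b_i.
\]
Because $1\le m\le n-1$, the coefficient $\binom{n-2}{m-1}$ is a positive integer, so $\sum_{i=1}^{n-1}b_i\ge 0$. (Alternatively, sort the $b_i$ increasingly: the $m$ smallest already sum to a nonnegative number, which forces the $m$-th smallest, and hence every larger, coefficient to be nonnegative, so the full sum is nonnegative.) Combining this with the first step, $2+\sum_{i=1}^{n-1}b_i\ge 2>0$ holds at every wall, so $-K_X$ is ample and $X$ is Fano.

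I expect no serious obstacle: all of the geometric and bundle-theoretic content is already encapsulated in the theorem, and what is left is elementary counting with the wall coefficients. The only point deserving genuine attention is that the conclusion requires the \emph{strict} inequality characterizing ampleness rather than mere nefness of $-K_X$; this strictness is furnished for free by the constant $2$, which arises from the $\O_{\P^1}(2)=\T_{\P^1}$ summand in the restriction $\T_X|_C$. It is precisely this shift by $2$ that makes the statement work, explaining why nefness of even a single exterior power $\Lambda^m\T_X$ already forces the Fano property.
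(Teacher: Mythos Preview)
Your proposal is correct and essentially matches the paper's proof. The paper orders the $b_i$ increasingly and writes the single chain $-K_X\cdot C = 2+\sum_{i=1}^{n-1} b_i > \sum_{i=1}^{m} b_i \ge 0$, which is exactly your parenthetical ``alternatively, sort the $b_i$'' argument; your primary averaging-over-subsets computation is a harmless variant reaching the same intermediate conclusion $\sum_i b_i\ge 0$.
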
 
Remembering that the signs of the $b_i$ in a wall relation corresponding to an invariant 
curve $C$ whose class is contained in an extremal ray of the Mori cone $\NE(X)$ determine the type of contraction given by the extremal ray of the class of $C$ (see section \ref{s:contractions}), the inequalities in the above theorem restrict the possible types of contractions, thus enabling us to use an inductive approach, at least in the case of small numbers $m$. For example, 
we readily see that the tangent bundle $\T_X$ itself is nef if and only if all $b_i$ in the wall relations corresponding to any invariant curve are positive, which in fact is equivalent to all contractions of $X$ being of fiber type. Similarly, the nefness of $\Lambda^2\T_X$ excludes flipping contractions as well as divisorial contractions for which the image of the exceptional divisor is positive dimensional. 
In fact, by using upper bounds on the intersections $-K_X\cdot C$ for extremal invariant curves $C$ recently established by Fujino and Sato (\cite{FS2}), we can also exclude divisorial contractions with 
singular image. Similar arguments work for the case when $\Lambda^3\T_X$ is ample. 

By the above analysis of possible contractions, we are enabled to inductively proving the 
following classification results the first of which in the toric situation extends Campana's and Peternell's classification to arbitrary dimensions.
\begin{introtheorem}
	Let $X$ be a smooth toric variety of dimension $n\ge3$ with $\Lambda^2\T_X$ nef. Then either $\T_X$ is nef, or 
	$X$ is the blowup of $P^n$ in a point. 
\end{introtheorem}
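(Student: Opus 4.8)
The plan is to translate the hypothesis into the toric minimal model program and to analyse the extremal contractions of $X$ one at a time.

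First I would apply the Corollary: since $\Lambda^2\T_X$ is nef and $2<n$, the variety $X$ is Fano, so $\NE(X)$ is rational polyhedral and generated by classes of torus-invariant curves. If every wall relation has all coefficients $b_i\ge 0$, then the case $m=1$ of the positivity criterion gives that $\T_X$ is nef and we are in the first alternative. Otherwise some invariant curve meets an invariant divisor negatively; decomposing its class into extremal classes yields an extremal ray $R$ whose wall relation $b_1\v_1+\dots+b_{n-1}\v_{n-1}+\v_n+\v_{n+1}=0$ has a negative coefficient. Feeding this wall into the case $m=2$, the inequalities $b_i+b_j\ge 0$ and $2+b_i\ge 0$ force exactly one coefficient to be negative, with value $-1$ or $-2$, and all remaining $b_i$ at least its absolute value.

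Next I would pin down the contraction $\phi_R\colon X\to Y$ associated with $R$. The negative coefficient makes $\phi_R$ birational. A small (flipping) contraction would produce at least two negative coefficients in its wall relation, which $b_i+b_j\ge 0$ forbids, so $\phi_R$ is divisorial with a single negative coefficient. A positive-dimensional image of the exceptional divisor would force a vanishing coefficient $b_j=0$, which together with the negative one $b_i<0$ gives $b_i+b_j<0$, again contradicting $\Lambda^2$-nefness; hence the image is a point, and combining the Fujino--Sato upper bounds for $-K_X\cdot C$ with the identity $-K_X\cdot C=2+b_1+\dots+b_{n-1}$ excludes the case of a singular image (where the negative coefficient is $-2$). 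Therefore $\phi_R$ is the blow-up of a smooth toric variety $Y$ at a torus-fixed point, with $E\cong\P^{n-1}$, exceptional ray $v_0=\v_1+\dots+\v_n$, and negative coefficient $-1$.

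It then remains to prove $Y\cong\P^n$, which is the heart of the matter. I would argue by induction on the Picard number $\rho(X)$, the base case $\rho=1$ being $X=\P^n$. The essential step is to show that blowing down $E$ preserves the hypothesis, i.e. that $\Lambda^2\T_Y$ is again nef; this amounts to comparing the two fans, tracking how the wall relations change when the ray $v_0$ is deleted and the cone $\langle\v_1,\dots,\v_n\rangle$ is reassembled, and checking that no curve of $Y$ acquires a forbidden splitting type. Granting this, $Y$ is Fano with $\Lambda^2\T_Y$ nef, so by induction $Y$ is either a smooth toric variety with nef tangent bundle or again a one-point blow-up of $\P^n$. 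In either case one must show that, unless $Y=\P^n$, there is a torus-invariant curve through the centre $p$ whose strict transform in $X$ carries a second negative coefficient or satisfies $-K_X\cdot C\le 0$; for instance two distinct centres are already impossible for $n\ge 3$, since the strict transform of the invariant curve joining them has $-K_X\cdot C\le 0$. This forces $Y=\P^n$, hence $X\cong\mathrm{Bl}_p\P^n$.

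I expect this last paragraph to be the main obstacle: both the descent of $\Lambda^2$-nefness along the blow-down and, above all, the elimination of every base $Y\ne\P^n$ with nef tangent bundle, which seems to require either the classification of smooth toric varieties with nef tangent bundle or a self-contained wall-relation computation showing that a single point blow-up of any such $Y$ creates a curve violating the $m=2$ inequalities. As a consistency check I would finally verify directly that $\mathrm{Bl}_p\P^n$ satisfies the hypothesis: a line in $E$ has splitting coefficients $\{-1,1,\dots,1\}$, while every other invariant curve lies in a fibre of the projection $\mathrm{Bl}_p\P^n\to\P^{n-1}$, so the criterion holds throughout.
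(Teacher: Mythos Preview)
Your strategy coincides with the paper's: analyse each extremal ray via the wall-relation criterion, rule out flips and divisorial contractions with positive-dimensional image directly from $b_i+b_j\ge0$, use the Fujino--Sato length bound to force $b_1=-1$ and hence an ordinary point blow-up to a smooth $Y$, then reduce to $Y=\P^n$.

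Two places where the paper's execution differs slightly from yours and is somewhat cleaner:

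\begin{itemize}
\item \emph{Descent of nefness.} You propose tracking wall relations under deletion of the exceptional ray; the paper instead invokes the relative tangent sequence for $f\colon X\to Y$, which realises $f^\ast\Lambda^2\T_Y$ as a quotient of $\Lambda^2\T_X$ away from codimension two, giving nefness of $\Lambda^2\T_Y$ in one line.
\item \emph{Endgame.} Rather than induct on $\rho(X)$, the paper iteratively contracts \emph{all} birational extremal rays until none remain; the resulting $Y_0$ has only fibre-type contractions and is therefore a product $\P^{k_1}\times\cdots\times\P^{k_s}$ by the Fujino--Sato classification you allude to. Then one checks directly that blowing up a point on a proper product, or more than one point on $\P^n$, produces a wall relation with $b_i+b_j<0$ (indeed, generically a flip). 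This avoids the separate treatment of the inductive case $Y=\mathrm{Bl}_q\P^n$ that you flagged as an obstacle---though note that in the nested case ($p\in E_q$) one obtains not a flip but a divisorial wall relation with a coefficient $-2$ and a coefficient $0$, which still violates $b_i+b_j\ge0$.
\end{itemize}

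One minor slip in your consistency check: in $\mathrm{Bl}_p\P^n$ there are also $\binom{n}{2}$ invariant curves coming from coordinate lines of $\P^n$ \emph{not} through $p$; these are not fibres of the projection to $\P^{n-1}$, but their wall relations have all $b_i=1$ so they cause no trouble.
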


\begin{introtheorem}
	Let $X$ be a smooth toric variety of dimension $n\ge4$ with $\Lambda^3\T_X$ ample. 
	Then either $\T_X$ is nef, or 
	$X$ is the blowup of $P^n$ in a point. 
\end{introtheorem}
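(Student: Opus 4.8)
The plan is to run the toric minimal model program on $X$ and to read off the admissible extremal contractions from the positivity criterion, exactly in the spirit of the preceding theorem for $\Lambda^2\T_X$. First I would note that ampleness implies nefness, so since $3<n$ the Corollary forces $X$ to be Fano; in particular $\NE(X)$ is a rational polyhedral cone whose extremal rays are generated by classes of invariant curves. Because $\T_X$ is nef precisely when every extremal contraction is of fiber type (equivalently, all $b_i\ge0$), I may assume $\T_X$ is not nef and thereby fix a birational extremal ray $R$ with invariant curve $C$ and wall relation $b_1\v_1+\dots+b_{n-1}\v_{n-1}+\v_n+\v_{n+1}=0$ having a negative coefficient.

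Next I would extract the local constraints from the criterion with $m=3$. The inequality $2+b_i+b_j>0$ forces at most one coefficient to be negative, since two negatives would sum to at most $-2$; and $b_i+b_j+b_k>0$ forces every triple to be strictly positive. Combining these, a wall relation carrying a (necessarily unique) negative coefficient $-c$, $c\ge1$, can contain no two vanishing coefficients, for $-c+0+0<0$. Translating through the dictionary of Section~\ref{s:contractions}, the birational contraction attached to $R$ is therefore divisorial; moreover a contraction whose exceptional divisor had positive-dimensional image would contribute zero coefficients that, together with $-c$, violate $b_i+b_j+b_k>0$. Hence the image of the exceptional divisor must be a single point.

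It then remains to pin down the wall relation. With one negative coefficient $-c$ and all others positive, the intersection $-K_X\cdot C=2+\sum_i b_i$ is bounded above by the Fujino--Sato estimates of \cite{FS2}; these, together with the lower bounds on the positive coefficients furnished by the triple inequalities, single out $(-1,1,\dots,1)$. In particular the unique negative coefficient must equal $-1$, for otherwise the image point is singular, a case excluded by \cite{FS2}, and the remaining coefficients are then forced to be $1$. Consequently $R$ is the contraction $\phi\colon X\to X'$ of a smooth torus-fixed point: $X'$ is again a smooth projective toric variety of dimension $n$, and $X$ is its blow-up at that point, with exceptional divisor $E\cong\P^{n-1}$ and $N_{E/X}\cong\O_{\P^{n-1}}(-1)$.

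Finally I would identify $X'$ with $\P^n$, arguing by induction on the Picard number: since $\rho(X)=\rho(X')+1$ and the only smooth projective toric variety with $\rho=1$ is $\P^n$, it suffices to prove $\rho(X')=1$. I expect this descent to be the main obstacle. The difficulty is that the wall relations of $X'$ are not simply inherited from those of $X$—the maximal cones in the star of the blown-up point are merged into one—so ampleness of $\Lambda^3\T_{X'}$ is not automatic and the induction cannot be applied blindly. The crux is therefore to control the remaining extremal rays of $X$ directly: one must show that a second birational ray, or a fiber-type ray incompatible with the blow-up structure, produces an invariant curve whose wall relation violates one of the $m=3$ inequalities, so that $\rho(X)=2$. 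Once $\rho(X)=2$ is established, Kleinschmidt's classification of smooth projective toric varieties of Picard number two leaves only $X\cong\P_{\P^{n-1}}(\O\oplus\O(1))$, which is the blow-up of $\P^n$ at a point, completing the argument.
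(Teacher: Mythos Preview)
Your argument tracks the paper's through step 5: the constraints from the $m=3$ criterion, the exclusion of small contractions and of divisorial contractions with positive-dimensional image, and the use of the Fujino--Sato bound to force the wall relation $(-1,1,\dots,1)$ are exactly what the paper does in Theorem~\ref{th:L3}.

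Where you diverge is in abandoning the induction. Your concern that ampleness of $\Lambda^3\T_{X'}$ is not automatic is reasonable---the paper passes over this point with a ``mutatis mutandis'' and a vague appeal to the relative tangent sequence---but it is easily settled by a direct fan computation. If $f\colon X\to X'$ contracts the ray $\rho_0$ with $\v_{\rho_0}=\v_1+\dots+\v_n$ (the blowup of the smooth cone $\sigma_0=\langle\v_1,\dots,\v_n\rangle$), then every wall of $\Sigma_{X'}$ is either already a wall of $\Sigma_X$ with the \emph{same} wall relation, or is a facet $\tau=\langle\v_1,\dots,\widehat{\v_j},\dots,\v_n\rangle$ of $\sigma_0$. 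In $\Sigma_X$ that facet is a wall with opposite rays $\v_{\rho_0}$ and some $\w$; substituting $\v_{\rho_0}=\v_j+\sum_{k\ne j}\v_k$ shows the wall relation in $\Sigma_{X'}$ has coefficients $b_k'=b_k+1$. Hence every inequality in the criterion for $X'$ is at least as strong as the corresponding one for $X$, and $\Lambda^3\T_{X'}$ is ample. With this in hand the paper's induction goes through verbatim: contract until all extremal rays are of fiber type, reach a product of projective spaces, and then observe that blowing up a point on a nontrivial product (or two points on $\P^n$) introduces a small contraction, which you have already excluded.

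Your alternative endgame via Kleinschmidt would also work once $\rho(X)=2$ is established, but you have not actually proved that; you have only outlined what one would need to check. Since the missing descent step is a two-line wall-relation computation, it is simpler to restore the induction than to carry out the case analysis you propose.
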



\section{Positivity of equivariant bundles}\label{s:equivariantBundles}
Our aim is to study exterior powers $\mathcal E$ of the tangent sheaf of a complete toric variety $X$. Such a sheaf
comes equipped with a natural  torus action as follows. For any $g\in T$, considered as an automorphism of
$X$, we have an isomorphism $\psi_g:g^\ast\mathcal E\cong\mathcal E$ such that for any two $g,g'\in T$ the 
obvious diagram commutes. Such sheaves are called $T$-equivariant, or simply toric sheaves. 

Note that the tangent sheaf $\T_X$ is reflexive by definition as the dual of the sheaf $\widehat{\Omega}^1_X$
of Zariski differentials. 
It turns out that reflexive equivariant sheaves on complete toric varieties can be specified by giving for any 
invariant prime divisor $D_\rho$ corresponding to a ray $\rho\in \Sigma(1)$ a filtration
$$
	E \supseteq \dots\supseteq E^\rho(i-1)\supseteq E^\rho(i)\supseteq E^\rho(i+1)\supseteq\dots\supseteq 0
$$
of a fixed finite dimensional vector space $E$. Concretely,  for a cone $\sigma\in \Sigma$ one 
considers the $k[\sigma]$-module $\Gamma(U_\sigma,\mathcal E)$ on the affine toric variety $U_\sigma$. 
It is equipped with a natural $M$-grading, and one writes $E^\sigma_m$ for the $\chi^m$-isotypical component. 
Since $\mathcal E$ is reflexive and $X$ is normal, sections are determined by their restriction to $X\setminus Y$ 
for any closed subset $Y$ of codimension at least two. In particular, 
 $\Gamma(U_\sigma,\mathcal E)=\bigcap_{\rho\in\Sigma(1)}\Gamma(U_\rho,\mathcal E)$ and the same description holds for 
 the graded pieces, thus $E^\sigma_m=\bigcap_{\rho\in\Sigma(1)} E^\rho_m$. Now, if $m-m'\in\rho^\perp$ 
 then $E^\rho_m=E^\rho_{m'}$. This leads to a filtration as above by setting
 $$
 	E^\rho((m\cdot\v_\rho)) = E^\rho_m
 $$
for the primitive generator $\v_\rho$ of the ray $\rho$. In case $\mathcal E$ is locally free, these 
filtrations satisfy the following compatibility condition: for any cone $\sigma\in\Sigma$ the line bundle $\mathcal E$ 
is trivial on the affine variety $U_\sigma$ so that $\Gamma(U_\sigma,\mathcal E)$ decomposes into a direct sum of free $k[\sigma]$-modules of rank 1. Klyachko proved in \cite{K} the following equivalence of categories: 

\begin{theorem}\label{Th:Filtration}
	The category of equivariant locally free sheaves on a complete toric variety is naturally equivalent 
	to the category of finite dimensional $k$-vector spaces $E$ together with a decreasing filtration $E^\rho(i)$ for each 
	ray $\rho\in\Sigma(1)$ such that for any maximal cone $\sigma\in\Sigma$, there is a decomposition 
	into $T$-eigenspaces $E=\bigoplus E_m$ such that 
	$$
		E^\rho(i)=\sum_{(m\cdot \v_\rho)\ge i} E_m
	$$
	for every ray $\rho\in\sigma(1)$. 
\end{theorem}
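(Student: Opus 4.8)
The plan is to establish the equivalence by constructing functors in both directions and checking they are mutually quasi-inverse; the content lives almost entirely in the local structure of equivariant bundles on the affine charts. First I would construct the functor from sheaves to combinatorial data. Given an equivariant locally free $\mathcal E$, I restrict to the open torus $T\subset X$: since $\mathcal E|_T$ is an equivariant bundle on the torus it is trivial, and its fiber $E$ over the identity carries a $T$-action, hence a weight decomposition $E=\bigoplus_m E_m$. For each ray $\rho$ the filtration should record the order of vanishing along the invariant divisor $D_\rho$; concretely one sets $E^\rho(i)=E^\rho_m$ for $(m\cdot\v_\rho)=i$, using the graded pieces $E^\rho_m$ of $\Gamma(U_\rho,\mathcal E)$ already introduced in the text, the point being that $E^\rho_m=E^\rho_{m'}$ whenever $m-m'\in\rho^\perp$, so the value depends only on $(m\cdot\v_\rho)$.

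The crux is the local analysis establishing the compatibility condition. For a maximal cone $\sigma$, the chart $U_\sigma$ is affine and $\mathcal E|_{U_\sigma}$ is equivariant locally free; the key lemma is that every such sheaf splits $T$-equivariantly as a direct sum of equivariant line bundles $\bigoplus_j k[\sigma]\,\chi^{m_j}$, because an equivariant line bundle on an affine toric variety is, up to the choice of equivariant structure, determined by a single character. Reading off the characters $m_j$ with multiplicities recovers a weight decomposition $E=\bigoplus E_m$ adapted to $\sigma$, and unwinding the definition of the filtrations shows that
\[
  E^\rho(i)=\sum_{(m\cdot\v_\rho)\ge i} E_m
\]
for every $\rho\in\sigma(1)$, which is exactly the asserted condition. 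The reverse functor reconstructs the sheaf from the data: on each affine chart one defines the $M$-graded module by
\[
  \Gamma(U_\sigma,\mathcal E)_m=\bigcap_{\rho\in\sigma(1)} E^\rho\big((m\cdot\v_\rho)\big),
\]
and the decomposition condition guarantees that this is a free $k[\sigma]$-module, so the resulting sheaf is genuinely locally free rather than merely reflexive. That sections over the torus determine sections everywhere then lets these local modules glue to a single equivariant sheaf, exactly as the intersection description $E^\sigma_m=\bigcap_\rho E^\rho_m$ in the text suggests.

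I expect the main obstacle to be proving essential surjectivity, namely that the combinatorial condition is both necessary and sufficient for local freeness. The necessity — that a locally free equivariant sheaf yields the eigenspace decomposition — and the sufficiency — that the intersection formula above produces a free module precisely when such a decomposition exists — both reduce to the affine local splitting lemma, and verifying that this lemma holds for all maximal cones is where the real work sits. Once that local statement is secured, checking that the two functors are quasi-inverse and natural in the objects is a routine, if bookkeeping-heavy, verification: one round trip reproduces $E$ with its weight spaces and valuative filtrations, while the other reproduces $\Gamma(U_\sigma,\mathcal E)$ chart by chart, and compatibility of the trivializations on overlaps follows from the fact that everything is pinned down on the dense torus.
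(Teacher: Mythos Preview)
The paper does not prove this theorem at all: it is stated as Klyachko's result and immediately followed by ``For the details of the above description we refer the reader to \cite{K}, or, for the more general framework of coherent sheaves, \cite{Per}.'' There is therefore no proof in the paper to compare your proposal against.

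For what it is worth, your sketch is a faithful outline of the standard argument in \cite{K}: extract the fiber $E$ over the identity, build the filtrations from the graded pieces $E^\rho_m$, use the equivariant splitting of $\mathcal E|_{U_\sigma}$ into line bundles to obtain the compatibility decomposition, and reconstruct the sheaf from the intersections $\bigcap_{\rho\in\sigma(1)} E^\rho((m\cdot\v_\rho))$. The one point to be careful about is that the weight decomposition of the fiber over the identity is not canonical across different maximal cones; the theorem only asserts that \emph{for each} $\sigma$ some decomposition exists, and different cones will in general force different splittings. Your phrasing at the start (``its fiber $E$ over the identity carries a $T$-action, hence a weight decomposition'') could be read as producing a single global decomposition, which would be too strong; but you correct this later when you speak of a decomposition ``adapted to $\sigma$''.
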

For the details of the above description we refer the reader to \cite{K}, or, for the more general 
framework of coherent sheaves, \cite{Per}.

Hering, Musta\c t\u a, and Payne in \cite{hmp} and subsequently Di Rocco, Jabbusch, and Smith in \cite{DJS} used Theorem 
\ref{Th:Filtration}  in order to study positivity of equivariant vector bundles on complete toric varieties. The fundamental result
for this approach  consists in the observation that nefness and ampleness of such bundles test on \emph{invariant} curves:

\begin{theorem}[\cite{hmp}, Theorem 2.1] \label{Th:TestOnCurves}
An equivariant vector bundle on a complete toric variety is nef (ample) 
if and only if its restriction to every torus-invariant curve is nef (ample).
\end{theorem}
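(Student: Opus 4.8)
The plan is to reduce the statement to the corresponding fact for line bundles on toric varieties by passing to the projectivized bundle. Write $\pi\colon\P(\mathcal E)\to X$ for the projective bundle and recall that, by definition, $\mathcal E$ is nef (ample) precisely when the tautological line bundle $\O_{\P(\mathcal E)}(1)$ is nef (ample) on $\P(\mathcal E)$. First I would observe that since $\mathcal E$ carries a $T$-equivariant structure, the torus action lifts to $\P(\mathcal E)$ and linearizes $\O_{\P(\mathcal E)}(1)$; together with the fibrewise torus this makes $\P(\mathcal E)$ a complete toric variety and $\O_{\P(\mathcal E)}(1)$ an equivariant line bundle on it. Thus the problem becomes the positivity of a single equivariant line bundle on a toric variety.

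The next step is to invoke the toric form of Kleiman's criterion: on a complete toric variety the Mori cone $\NE(X)$ is generated by the classes of torus-invariant curves, so a line bundle is nef if and only if it has nonnegative degree on every invariant curve, and ample if and only if it has positive degree on every invariant curve. Applying this to $\O_{\P(\mathcal E)}(1)$, it remains to understand the invariant curves of $\P(\mathcal E)$ and to match their intersection numbers with the positivity of $\mathcal E$ on invariant curves of $X$.

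The heart of the argument is therefore the classification of the torus-invariant curves $C'\subseteq\P(\mathcal E)$ according to their image under $\pi$. Each such $C'$ either (a) is contracted to a torus-fixed point of $X$, hence lies in a fibre $\pi\inverse(x)\cong\P^{r-1}$ with $r$ the rank of $\mathcal E$, or (b) maps onto a torus-invariant curve $C\subseteq X$. In case (a) the restriction of $\O_{\P(\mathcal E)}(1)$ to the fibre is $\O_{\P^{r-1}}(1)$, which is ample, so these curves automatically satisfy the positivity inequalities and impose no condition. In case (b) the preimage $\pi\inverse(C)=\P(\mathcal E|_C)$ is a projective bundle over $C\cong\P^1$, and the invariant curves lying in it detect exactly the splitting type of $\mathcal E|_C$; hence $\O_{\P(\mathcal E)}(1)$ is nonnegative (positive) on all invariant curves over $C$ if and only if $\mathcal E|_C$ is nef (ample). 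Combining the two cases shows that $\O_{\P(\mathcal E)}(1)$ is nef (ample) on all of $\P(\mathcal E)$ exactly when $\mathcal E|_C$ is nef (ample) for every invariant curve $C\subseteq X$, which is the assertion.

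The main obstacle I expect is the ampleness half, where one must know that positivity on invariant curves genuinely suffices rather than only nonnegativity. Unlike in Mumford's general examples, this is legitimate in the toric setting: the ample cone and the nef cone are the polyhedral duals of the cone generated by invariant curve classes, so ampleness can be read off from invariant curves alone (and, when it holds, already forces $\P(\mathcal E)$ to be projective). A secondary technical point is to make the correspondence in case (b) precise, identifying which invariant curves in $\P(\mathcal E|_C)$ compute each summand $\O_{\P^1}(a_i)$ of the splitting type and checking that nonnegativity (resp.\ positivity) of all the $a_i$ is the same as nefness (resp.\ ampleness) of $\mathcal E|_C$.
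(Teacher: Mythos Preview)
The paper does not supply its own proof of this statement; it is quoted as \cite[Theorem~2.1]{hmp} and used as a black box, so there is nothing in the paper to compare your argument against. That said, your outline is correct and is essentially the argument of the cited source: pass to $\P(\mathcal E)$, observe that the equivariant structure on $\mathcal E$ makes $\P(\mathcal E)$ a complete toric variety on which $\O_{\P(\mathcal E)}(1)$ is torus-linearized, invoke the toric criterion that a line bundle is nef (ample) if and only if it has nonnegative (positive) degree on every invariant curve, and then sort the invariant curves of $\P(\mathcal E)$ according to whether they lie in a fibre over a fixed point of $X$ or dominate an invariant curve $C\subseteq X$. Your treatment of the ampleness subtlety is also sound: on a complete toric variety, strict positivity of a Cartier divisor on all walls is equivalent to strict convexity of the support function, hence to ampleness, so projectivity need not be assumed in advance. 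The only place worth tightening is case~(b): not every invariant curve of $\P(\mathcal E|_C)$ is a section, but the Mori cone of a projective bundle over $\P^1$ is generated by a fibre line together with the section of minimal degree, so the extremal sections already detect all the $a_i$ and no further conditions arise.
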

Since a torus-invariant curve $C$ is isomorphic to $\P^1$, the restriction $\mathcal E|_C$ of a vector bundle $\mathcal E$ 
of rank $r$ decomposes as the direct sum of line bundles $\bigoplus_{i=1}^r \O_{\P^1}(a_i)$. In fact, for an equivariant vector bundle this is an  \emph{equivariant} splitting by \cite{Kum}. Now, the  positivity of the restriction 
can easily be read off the splitting type $(a_1,\dots,a_r)$. In order to determine the splitting types of restrictions of an equivariant 
bundle $\mathcal E$, Hering, Musta\c t\u a, Payne consider the restriction of $\mathcal E$ to $U_\sigma$ and $U_{\sigma'}$ where 
$\sigma$ and $\sigma'$ denote the two full dimensional cones in $\Sigma$ whose intersection is the wall $\tau$ corresponding to the 
invariant curve $C$. 

For any cone $\sigma\in \Sigma$, line bundles on the affine toric variety $U_\sigma$ are of the form 
$\mathcal L_\uu=\O_{U_\sigma}\mbox{div}(\chi^\uu)$ for some $m\in M$ and the restriction of an equivariant vector bundle to $U_\sigma$
decomposes as a  sum $\bigoplus \mathcal L_{\uu_i}$ of such line bundles. Furthermore, for a full-dimensional cone $\sigma$, 
the lattice points $\uu_i$ coming up in the splitting only depend on $\sigma $ and $\mathcal E$ (see \cite{hmp}). 

If two lattice points  $\uu$ and $\uu'$ agree as linear functionals on the wall $\tau$ then the equivariant line 
bundles $\mathcal L_\uu$ and $\mathcal L_{\uu'}$ glue together by the transition function $\chi^{\uu-\uu'}$ to form an equivariant line 
bundle $\mathcal L_{\uu,\uu'}$ on the union $U_\sigma\cup U_{\sigma'}$. Now, $\mathcal L_{\uu,\uu'}$ restricts to $C$ as follows:
Let $\w_\tau\in M$ be the primitive generator of $\tau^\bot$ which is positive on $\sigma$. Then $\uu-\uu'=m\w_\tau$ 
for some integer $m$ and we have $\mathcal  L_{\uu,\uu'}|_C\cong \O_{\P^1}(m)$.

Using continuous interpolations of Klyachko's filtration, which were introduced in \cite{pa}, it is shown in \cite{hmp} that by 
subdividing $\Sigma$, or equivalently, by passing to a higher toric birational model $\pi:X'\to X$, one can guarantee that 
the restrictions of $\pi^\ast\mathcal E$ to unions $U_{\sigma}\cup U{\sigma'}$  split equivariantly as sums of line bundles of the 
form  $\mathcal L_{\uu,\uu'}$. Then for any invariant curve $C'$ on $X'$ mapped isomorphically to $C$, the splitting types of $\pi^\ast
\mathcal E|{C'}$ and $\mathcal E|_C$ agree, so that we get up to reordering uniquely determined pairs of characters $(\uu_i,\uu_i')$ 
such that $\mathcal E|_C$ splits equivariantly as
$$
	\mathcal E|_C \cong \mathcal L_{\uu_1,\uu_1'}\oplus\dots\oplus \mathcal L_{\uu_r,\uu_r'}.
$$
Hence, the positivity of $\mathcal E$ is decided by determining the pairs $(\uu_i,\uu_i')$ of characters for each invariant curve $C$.
\cite{DJS} provides  a nice description of these pairs as well as the numbers $m$ in terms of collections (or parliaments) of 
certain polytopes reflecting the global sections of $\mathcal E$. We will not go into detail here, as for the bundles in question in 
this note the pairings of characters can readily be read off from the Klyachko filtrations.

\section{Toric extremal contractions}\label{s:contractions}
We collect here results from the theory of toric varieties, in particular  their birational geometry which we will need in the remainder 
of this note. The main reference is the comprehensive book  \cite{CLS} by Cox, Little, and Schenck; we follow their notation throughout. 

Let $X$ be  an  $n$-dimensional toric variety corresponding to a fan $\Sigma$ in $N\otimes_\Z\R\cong\R^n$ where $N$ is a 
lattice of rank $n$. We denote the dual lattice Hom${}_\Z(N)$ with $M$. Then $T=\mbox{Spec } \C(M)\cong (\C^\ast)^n$ is a 
dense open torus acting on $X$. We will assume that $X$ is complete, i.e., $\Sigma$ has all of $N\otimes_\Z\R$ as support, or even that 
$X$ is smooth which in terms of $\Sigma$ is equivalent to  the following: $\Sigma$ is simplicial and for each  maximal (i.e. $n$-dimensional) cone $\sigma\in\Sigma$ the primitive generators $\v_i$ of the rays $\rho_i\in\sigma(1)$, for $i=1,\dots,n$,  form a lattice basis of $N$. 

Toric varieties are particularly transparent from the viewpoint of birational geometry as their minimal model programs can be completely
described in terms of convex geometry, i.e., by considering fans: Let $\Sigma$ be a simplicial cone. Then an extremal ray $\mathcal R$
in the Mori cone $\NE(X)$ induces a toric morphism (an \emph{extremal contraction}) $\phi:X\to X'$ to a semi-projective 
toric variety such that for a wall 
$\tau\in\Sigma(n-1)$ the corresponding curve $V(\tau)$ is contracted to a point if and only if the class of $V(\tau)$ lies in $\mathcal R$ 
(\cite[Proposition 15.4.1]{CLS}). 
In order to describe $\phi_{\mathcal R}$, remember that an extremal ray $\mathcal R$ is generated by an  extremal relation 
$$
	b_{\rho_1}\v_{\rho_1}+\dots b_{\rho_{n+1}}\v_{\rho_{n+1}} = 0
$$
where the $\v_{\rho_i}$ are the primitive generators of the $n+1$ rays which span those two maximal cones $\sigma$, $\sigma'$ whose 
intersection is the wall $\tau$ corresponding to an invariant curve in $\mathcal R$.  Setting
\begin{align*}
	J_-&=  J_{\mathcal R, -} = \set{\rho\in \Sigma(1)\mid b_\rho <0}\\
	J_+&= J_{\mathcal R, +} = \set{\rho\in \Sigma(1)\mid b_\rho >0},
\end{align*}
we have the following description of the toric extremal contraction $\phi_{\mathcal R}$.
\begin{proposition}[\cite{CLS}, Lemma 15.5.2, Proposition 15.4.5] \label{Th:cls} 
	The exceptional locus of $\phi_{\mathcal R}$ has codimension $|J_-|$ and the fiber over points in 
	its image are fake weighted projective spaces  of dimension $|J_+|-1$, concretely,
	\begin{enumerate}
		\item $\phi_{\mathcal R}$ is of fiber type if and only if $J_-=\emptyset$.
		\item $\phi_{\mathcal R}$ is divisorial if and only if $|J_-|= 1$. In this case the image of the exceptional divisor has dimension
		$n - |J_+|$.
		\item $\phi_{\mathcal R}$ is a small contraction if and only if $|J_-|>1$.
	\end{enumerate}
\end{proposition}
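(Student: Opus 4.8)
The plan is to read the whole contraction off the fan by means of Reid's combinatorial description of the toric MMP, so that the three assertions become statements about the circuit $\{\v_{\rho_1},\dots,\v_{\rho_{n+1}}\}$ and the signs of the coefficients $b_{\rho_i}$. First I would record the sign constraints forced by the geometry. The wall $\tau$ is spanned by $n-1$ of the rays, and the two rays $\rho_\sigma,\rho_{\sigma'}$ completing $\tau$ to the maximal cones $\sigma$ and $\sigma'$ enter the extremal relation with positive coefficient (this is precisely the normalization fixing the relation up to scale, as in the wall relation of the introduction). Consequently $J_-$ is contained in $\tau(1)$, whence $|J_-|\le n-1$, while $\{\rho_\sigma,\rho_{\sigma'}\}\subseteq J_+$ and $|J_+|\ge 2$. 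Writing $J_0$ for the rays with vanishing coefficient, the $n+1$ rays partition as $J_-\cup J_0\cup J_+$; this bookkeeping already makes the trichotomy $J_-=\emptyset$, $|J_-|=1$, $|J_-|>1$ exhaustive and is compatible with the stated fiber dimension $|J_+|-1\ge 1$.

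Next I would construct the target fan $\Sigma'$ and the morphism $\phi_{\mathcal R}$ explicitly and reduce to a local model. The extremal ray determines a supporting function, and the contraction is obtained by merging, across every wall whose curve lies in $\mathcal R$, the two adjacent maximal cones; locally around $V(\tau)$ this merging is governed entirely by $\sigma$, $\sigma'$ and the single relation above. The maximal cone of $\Sigma'$ receiving the image of $\tau$ is $\omega=\pos(\v_\rho:\rho\in J_+\cup J_0)$, and $\phi_{\mathcal R}$ is the induced toric morphism. I would check properness and that $\phi_{\mathcal R}$ contracts $V(\tau')$ exactly when $[V(\tau')]\in\mathcal R$, which is \cite[Prop.~15.4.1]{CLS} recalled above.

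With the fan in hand the remaining assertions follow from the orbit-cone correspondence and a quotient computation. A point lies on a contracted orbit closure precisely when its cone contains $\pos(\v_\rho:\rho\in J_-)$, so the exceptional locus is the union of orbit closures meeting $V(\pos(\v_\rho:\rho\in J_-))$ and has codimension $|J_-|$. To identify the fibers I would pass to the quotient $N_\R\to N_\R/\langle\v_\rho:\rho\in J_-\cup J_0\rangle_\R$: the extremal relation descends to $\sum_{\rho\in J_+}b_\rho\,\bar\v_\rho=0$ with all weights $b_\rho>0$, and the images of $\{\v_\rho:\rho\in J_+\}$ are exactly the rays of the fan of $\P(b_\rho:\rho\in J_+)$, realizing the fiber as a fake weighted projective space of dimension $|J_+|-1$. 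Reading off the three regimes then finishes the proof: $J_-=\emptyset$ gives codimension-zero exceptional locus, hence fiber type; $|J_-|=1$ gives a divisorial contraction whose exceptional divisor $V(\v_\rho)$, with $\{\rho\}=J_-$, is collapsed by $(|J_+|-1)$-dimensional fibers onto an image of dimension $n-|J_+|$ (equivalently $|J_0|$, since $|J_0|+|J_+|=n$ here); and $|J_-|>1$ gives a small contraction.

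The main obstacle I anticipate is the fiber identification, specifically proving the fibers are fake rather than honest weighted projective spaces. This forces one to track the lattice and not merely the fan in $N_\R$: after the quotient, the images of $\{\v_\rho:\rho\in J_+\}$ need not generate the full quotient lattice but only a finite-index sublattice, and the resulting finite quotient is exactly what separates the fake from the genuine weighted projective space. Controlling this index, and verifying that the merged cone $\omega$ is a single cone of the simplicial target rather than one demanding further subdivision, is where the simpliciality hypothesis on $\Sigma$ carries the argument.
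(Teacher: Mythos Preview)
The paper does not prove this proposition at all: it is stated with attribution to \cite[Lemma~15.5.2, Proposition~15.4.5]{CLS} and immediately followed by a remark, with no intervening proof environment. There is therefore no ``paper's own proof'' to compare against; the author is simply importing the result from Cox--Little--Schenck.

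That said, your sketch is correct and is essentially the argument one finds in \cite{CLS} (itself going back to Reid). The key steps---normalizing so that the two rays completing the wall lie in $J_+$, merging $\sigma\cup\sigma'$ into the single cone $\pos(\v_\rho:\rho\in J_0\cup J_+)$ (the $J_-$ rays being interior and hence redundant), reading the codimension of the exceptional locus from the orbit--cone correspondence, and identifying the fiber via the quotient $N_\R/\langle\v_\rho:\rho\in J_-\cup J_0\rangle_\R$ where the surviving relation $\sum_{\rho\in J_+}b_\rho\bar\v_\rho=0$ exhibits a (fake) weighted projective space---are exactly those of the cited reference. Your closing remark about tracking the lattice, not just $N_\R$, to distinguish fake from genuine weighted projective spaces is the right subtlety and is also how \cite{CLS} handles it.
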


\begin{remark}\label{Rmk:smooth}
	The wall relation $b_{\rho_1}\v_{\rho_1}+\dots b_{\rho_{n+1}}\v_{\rho_{n+1}} = 0$ corresponding to a wall $\tau$ spanned by $\v_2,\dots,\v_n$ can also 
	be realized as a positive multiple of the linear relation
	$$
		\sum_{\rho}(D_\rho\cdot V(\tau))\v_\rho =0,
	$$
	(see \cite[Section 6.4]{CLS}).  By \cite[Proposition 6.4.4]{CLS}, the intersection numbers in this relation can be expressed as 
	$$
		D_{\rho_i}\cdot  V(\tau) = \frac{b_i\mult(\tau)}{b_1\mult(\sigma)} =  \frac{b_i\mult(\tau)}{b_{n+1}\mult(\sigma')}.
	$$
	Here the multiplicity $\mult(\sigma)$ of a simplicial cone is the index in $N_\sigma=\sigma\cap N +(-\sigma)\cap N$  of the 
	lattice generated by 
	the primitive generators $\v_1\dots,\v_k$ of $\sigma$. Hence, in case $X$ is smooth, all multiplicities are one, 
	so in the wall relation we can assume $b_1=b_{n+1}=1$. 
\end{remark}
Another result we will use gives estimates for the length $\ell(\mathcal R)$ of an extremal ray $\mathcal R$, i.e., the smallest intersection
of $-K_X$ with an irreducible curve $C$ with class in $\mathcal R$. Mori's  bend and break technique produces for any $(K_X)$-negative extremal ray a possibly singular rational curve $C$ with class in $\mathcal R$ satisfying $0<-K_XC\le\dim X+1$. In the toric situation 
the upper bound has recently been improved by Fujino and Sato with the additional benefit that the curve in which the upper 
bound  is realized is smooth and torus-invariant at least in the case where the associated map $\phi_{\mathcal R}$ is birational. 

\begin{theorem}[\cite{FS2}, Theorem 3.2.1]\label{Th:FS}
	Let $X$ be a $\Q$-factorial  toric variety of dimension $n$. Let $\phi_{\mathcal R}: X\to W$ be a birational contraction 
	associated to a $(K_X)$-negative extremal ray $\mathcal R$ in $\NE(X)$. Denoting 
	$$
		d=\max_{w\in W} \dim \phi_{\mathcal R}^{-1}(w),
	$$ 
	there exists a torus-invariant curve $C$ with $[C]\in\mathcal R$ such that
	$$
		-K_X\cdot C< d+1 \le n.
	$$
	If $\phi_{\mathcal R}$ is a  divisorial contraction, i.e., $d=n-1$, then we get the stronger inequality $\ell(\mathcal R)\le n-1$ 
	and equality holds if only if
	$\phi_{\mathcal R}$ is a weighted blowup of a smooth torus-invariant point with weight $(1,a,\dots,a)$ for 
	some positive integer $a$. 
\end{theorem}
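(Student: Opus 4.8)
The plan is to reduce the length $\ell(\mathcal R)$ to an explicit optimization over the walls whose curve class lies in $\mathcal R$, governed by the coefficients of the wall relation, and then to read off the two bounds from the sign pattern of these coefficients.

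First I would record the combinatorial formula for the anticanonical degree of an invariant curve. Writing $-K_X=\sum_{\rho\in\Sigma(1)}D_\rho$ and observing that for a wall $\tau\in\Sigma(n-1)$ only the divisors $D_{\rho_i}$ indexed by the $n+1$ rays of the wall relation $\sum_i b_i\v_i=0$ meet $V(\tau)$, the intersection numbers of Remark \ref{Rmk:smooth} give
$$
	-K_X\cdot V(\tau)=\frac{\mult(\tau)}{b_\rho\,\mult(\sigma)}\sum_{i}b_i
$$
for any extra ray $\rho\in\sigma(1)\setminus\tau(1)$, which necessarily has $b_\rho>0$. Two structural inputs drive the estimates: since $\tau$ is a facet of the simplicial cone $\sigma$ one has $\mult(\tau)\mid\mult(\sigma)$, hence $\mult(\tau)\le\mult(\sigma)$; and by Proposition \ref{Th:cls} the positive coefficients are indexed by $J_+$ with $|J_+|=d+1$, while $J_-\neq\emptyset$ since $\phi_{\mathcal R}$ is birational. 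Moreover the walls $\tau$ with $[V(\tau)]\in\mathcal R$ are exactly those obtained by selecting two rays of $J_+$ to play the role of the extra rays, the remaining $n-1$ rays spanning $\tau$.

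For the first inequality I would pick the wall $\tau$ whose two extra rays carry the largest positive coefficients and normalize by $b_\rho=b_{\max}:=\max_{i\in J_+}b_i$. Then
$$
	-K_X\cdot V(\tau)\le\frac{\sum_i b_i}{b_{\max}}=\frac{\sum_{i\in J_+}b_i-\sum_{j\in J_-}|b_j|}{b_{\max}}<\frac{\sum_{i\in J_+}b_i}{b_{\max}}\le|J_+|=d+1\le n,
$$
where $\mult(\tau)\le\mult(\sigma)$ justifies the first step, the strict inequality uses $J_-\neq\emptyset$, and $d+1\le n$ holds because at least one coefficient is negative. For the divisorial case, $|J_-|=1$ together with $d=n-1$ forces $|J_+|=n$ and $J_0=\emptyset$, so the relation reads $\sum_{i\in J_+}b_i\v_i=|b_0|\v_0$ with all $b_i>0$ and $\v_0$ the unique exceptional ray, and $\phi_{\mathcal R}$ contracts the divisor $D_0$ to a point. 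The wall still omits exactly two rays of $J_+$, so minimizing the degree formula over walls yields
$$
	\ell(\mathcal R)=\frac{\sum_{i\in J_+}b_i-|b_0|}{L},\qquad L:=\max_\tau\frac{b_\rho\,\mult(\sigma)}{\mult(\tau)},
$$
the maximum over the admissible walls and their extra rays. An elementary but delicate estimate, using the primitivity (coprimality) of the relevant lattice data, bounds the right-hand side by $n-1$; tracing the equality case shows that $n-1$ of the positive coefficients must coincide and that $W$ is smooth at the contracted point, which identifies $\phi_{\mathcal R}$ as the weighted blowup with weight $(1,a,\dots,a)$, the minimizing $V(\tau)\cong\P^1$ being the desired invariant curve.

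The step I expect to be the main obstacle is the sharp control of the multiplicities $\mult(\tau)$ and $\mult(\sigma)$ entering $L$. Because the bound $n-1$ is attained, one cannot afford to lose any factor, so the delicate points are identifying the wall that maximizes $L$ and then analyzing the equality $\sum_{i\in J_+}b_i-|b_0|=(n-1)L$: here one must rule out all weight systems except $(1,a,\dots,a)$ and \emph{simultaneously} deduce that $W$ is smooth at the contracted point, rather than merely bounding $\ell(\mathcal R)$ by a rational number. This is exactly where the divisoriality hypothesis $|J_-|=1$, which collapses the image of the exceptional divisor to a point and makes the local weighted-blowup model available, is essential.
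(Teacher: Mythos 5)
First, note that the paper does not prove this statement at all: it is imported verbatim from Fujino--Sato \cite{FS2}, Theorem 3.2.1, and the paper only adds a remark extracting the torus-invariance of the curve from their proof. So there is no in-paper proof to compare against; your attempt has to be judged on its own. The first half of your argument is essentially sound. The formula $-K_X\cdot V(\tau)=\frac{\mult(\tau)}{b_\rho\mult(\sigma)}\sum_i b_i$, the divisibility $\mult(\tau)\mid\mult(\sigma)$ for a facet of a simplicial cone, the fact (from the structure of the exceptional locus in \cite[Lemma 15.5.2]{CLS}) that the cones obtained by dropping two rays of $J_+$ are walls whose curves lie in $\mathcal R$ with the same relation, and the count $|J_+|=d+1\le n$ all hold, and choosing the wall whose extra ray carries $b_{\max}$ gives $-K_X\cdot V(\tau)<d+1$ exactly as you say. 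This matches the spirit of Fujino--Sato's actual argument.

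The genuine gap is the divisorial case, which is the only part of the theorem the paper actually uses (in Theorems \ref{th:L2} and \ref{th:L3}), and which you do not prove. Your own computation exposes the problem: the estimate you establish in the first half gives only
$$
\ell(\mathcal R)\;\le\;\frac{\sum_{i\in J_+}b_i-|b_0|}{b_{\max}}\;\le\;n-\frac{|b_0|}{b_{\max}},
$$
and this is \emph{not} $\le n-1$ in general --- for the weighted blowup of weight $(1,a,\dots,a)$ one has $|b_0|=1$, $b_{\max}=a$, so the right-hand side is $n-1/a>n-1$ for $a>1$. Since $K_X$ is only $\Q$-Cartier, the strict inequality $<n$ from the first part does not round down to $\le n-1$ either. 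Hence the sharp bound forces a genuine analysis of the ratio $\mult(\tau)/\mult(\sigma)$ (which must be strictly less than $1$ precisely when the naive bound fails), together with a case analysis of the weight vector $(b_1,\dots,b_n,|b_0|)$ to isolate $(1,a,\dots,a)$ and to show smoothness of $W$ at the contracted point in the equality case. All of this is compressed in your write-up into ``an elementary but delicate estimate, using the primitivity of the relevant lattice data'' --- but that estimate is the entire content of the refined statement, not a routine verification, and as it stands the proposal does not establish $\ell(\mathcal R)\le n-1$ nor the characterization of equality.
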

\begin{remark}
	Fujino and Sato do not state the torus-invariance of the curve $C$ realizing the bound in the result. However,  it is apparent 
	from their proof. In particular, if $\phi_{\mathcal R}$ is birational, and the 
	contraction corresponds to  walls $\tau_i\in\Sigma(n-1)$, then for some $i$ we have $-K_X\cdot V(\tau_i)<d+1$. 
	
	Furthermore, in the smooth case the upper bound is  realized in the invariant 
	curve $C=V(\tau)$ for 
	$\tau$ the face on $\Sigma$ corresponding to the ray $R$. This follows from 
	Remark \ref{Rmk:smooth}, as the identity $D_{\rho_n}V(\tau)=1$ for the effective
	 integral divisor $D_{\rho_n}$ rules out the existence of an invariant curve $C'$ in $R$ such that
	 $C=kC'$ for $1<k\in\Z$, and the fact that $-K_X\cdot C>0$.
\end{remark}


\section{Exterior powers of the tangent bundle}
Let $X_\Sigma$ be a non-singular toric variety corresponding to a fan $\Sigma$ 
whose rays $\rho_1,\dots,\rho_d$ are generated minimally by lattice points $v_1,\dots,v_d$. 

The Klyachko filtration of the tangent bundle on $X$ is given by 
\begin{alignat*}{2}
	E^{\rho_j}(i) = \begin{cases}E&\qquad i\le 0 \\
						<v_j>&\qquad i=1\\
						0 	&\qquad i>1,
					\end{cases}
\end{alignat*}
(see \cite[Subsection 2.3.5]{K}). Gonz\'alez in \cite[Section 3]{G11} calculates the Klyachko filtrations of symmetric 
and exterior powers of an equivariant 
 vector bundle $E$  from the filtration of $E$ itself. However, for our purposes the following description of 
 the tangent bundle itself suffices to deduce positivity criteria for its exterior powers.

By Theorem \ref{Th:TestOnCurves}, in order to determine nefness or ampleness of an equivariant bundle,
we need to study its restrictions to invariant curves on $X$. In the case of the tangent bundle this is particularly 
simple and closely related to the possible toric extremal contractions of $X$ as we will see now. Note that the dual 
calculation for the cotangent bundle  $\Omega_X$ has already been described in \cite[Example 5.2]{DJS}. 
For the convenience of the reader we will repeat the steps of the calculation for $\T_X$ here. Let $C$ be an invariant 
curve on $X$ corresponding to an $(n-1)$-dimensional face (or \emph{wall}) $\tau$ of the fan $\Sigma$. 
Let furthermore $\sigma$ and $\sigma'$ be the two full-dimensional cones adjacent along $\tau$. Since $X$ is smooth,  there is a lattice 
basis $\v_1,\dots,\v_n$ such that $\sigma=\pos(\v_1,\dots,\v_n)$. By  Remark \ref{Rmk:smooth}, we may assume that $\sigma'=\pos(\v_1,\dots,\v_{n-1},\v_{n+1})$ where $\v_{n+1}=a_1\v_1+a_2\v_2+\dots+a_{n-1}\v_{n-1}-\v_n$ for some $a_j\in\Z$. 
The restrictions $\T_X|_{U_\sigma}$ and $\T_X|_{U_{\sigma'}}$ decompose as sums of line bundles
$$
	\T_X|_{U_\sigma} = \bigoplus_{i=1}^n \mathcal L_{\w_i}, \quad \T_X|_{U_{\sigma'}} = \bigoplus_{i=1}^n \mathcal L_{\widetilde\w_i},
$$
where $\w_1,\dots,\w_n$ is the dual basis to $\v_1,\dots,\v_n$, and $\widetilde\w_1,\dots,\widetilde\w_n$ is the dual basis to $\v_1,\dots,\v_{n-1},\v_{n+1}$. Expressing the latter in terms of the 
former, we obtain associated characters  ${\bf u}(\sigma')=\set{\w_1+a_1\w_n,\w_2+a_2\w_n,\dots, \w_{n-1}+a_{n-1}\w_n,-\w_n}$, and the characters are paired along $C$ as follows: $(\w_n,-\w_n)$ and $(\w_i,\w_i+a_i\w_n)$ for $1\le i\le n-1$. Since  $\w_\tau = \w_n$, and $\w_i-(\w_i+a_i\w_n) = -a_i\w_n$, the restriction of $\mathcal T_X$ to the invariant curve $C\cong \P^1$ decomposes as a sum of line bundles
$\T_X|_C\cong \O_{\P^1}(2) \oplus \O_{\P^1}(-a_1)\oplus\dots\oplus  \O_{\P^1}(-a_{n-1})$. 

Note that calculating the numbers $a_i$ above is equivalent to determining the wall relation 
$$
	\sum_{i=1}^{n+1} b_i\v_i = 0
$$
corresponding to the wall $\tau$. On the other hand, as we have seen in Section \ref{s:contractions}, 
in case $C$ is an extremal curve the vector $(b_1,\dots,b_{n+1})$ yields plenty
of information about the birational map
given by contracting $C$. We record the above result in 
\begin{proposition}
	Let $C$ be an invariant curve on $X_\Sigma$ corresponding to a face $\tau\in\Sigma(n-1)$ and let
	$b_1\v_1+\dots+b_{n-1}\v_{n-1}+\v_n +\v_{n+1}=0$ be the corresponding wall relation. Then
	$$
		\T_X|_C\cong  \O_{\P^1}(2) \oplus \O_{\P^1}(b_1)\oplus\dots\oplus  \O_{\P^1}(b_{n-1}).
	$$ 
\end{proposition}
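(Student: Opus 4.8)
The plan is to read off the splitting type of $\T_X|_C$ directly from the Klyachko filtration of $\T_X$ recorded at the beginning of this section, to glue the local pieces across the wall via the procedure of Section \ref{s:equivariantBundles}, and then to match the resulting degrees against the wall-relation convention. The derivation in the paragraphs above already performs this computation, writing the exponents as $-a_i$; what a proof must add is an organized account of these steps together with the identification $-a_i=b_i$.

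First I would fix the local data. By smoothness there is a lattice basis $\v_1,\dots,\v_n$ with $\sigma=\pos(\v_1,\dots,\v_n)$, and by Remark \ref{Rmk:smooth} the adjacent cone is $\sigma'=\pos(\v_1,\dots,\v_{n-1},\v_{n+1})$ with $\v_{n+1}=a_1\v_1+\dots+a_{n-1}\v_{n-1}-\v_n$ for suitable $a_i\in\Z$. Applying Theorem \ref{Th:Filtration} to the filtration $E^{\rho_j}(i)$ of $\T_X$, the eigenspace decomposition attached to the maximal cone $\sigma$ is $E_{\w_i}=\langle\v_i\rangle$, where $\w_1,\dots,\w_n$ is the basis dual to $\v_1,\dots,\v_n$; one checks that with $\langle\w_k,\v_j\rangle=\delta_{kj}$ this reproduces $E^{\rho_j}(i)=E$ for $i\le0$, $\langle\v_j\rangle$ for $i=1$, and $0$ for $i>1$. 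Hence $\T_X|_{U_\sigma}=\bigoplus_i\mathcal L_{\w_i}$, and similarly $\T_X|_{U_{\sigma'}}=\bigoplus_i\mathcal L_{\widetilde\w_i}$ for the basis $\widetilde\w_i$ dual to $\v_1,\dots,\v_{n-1},\v_{n+1}$.

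Next I would pair the characters across the wall $\tau=\pos(\v_1,\dots,\v_{n-1})$. Expressing the $\widetilde\w_i$ in the $\w$-basis, the duality relations against $\v_{n+1}=a_1\v_1+\dots+a_{n-1}\v_{n-1}-\v_n$ force $\widetilde\w_i=\w_i+a_i\w_n$ for $i\le n-1$ and $\widetilde\w_n=-\w_n$. Two characters glue over $U_\sigma\cup U_{\sigma'}$ precisely when they agree on $\tau$, i.e. differ by a multiple of the primitive generator $\w_\tau=\w_n$ of $\tau^\perp$; this pairs $\w_i$ with $\w_i+a_i\w_n$ and $\w_n$ with $-\w_n$. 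Following Section \ref{s:equivariantBundles}, the degree of the glued bundle on $C$ is the integer $m$ with $\uu-\uu'=m\w_\tau$, so $(\w_n,-\w_n)$ contributes $\O_{\P^1}(2)$ and each $(\w_i,\w_i+a_i\w_n)$ contributes $\O_{\P^1}(-a_i)$. By Theorem \ref{Th:TestOnCurves} this determines $\T_X|_C\cong\O_{\P^1}(2)\oplus\bigoplus_{i=1}^{n-1}\O_{\P^1}(-a_i)$.

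It remains to replace $-a_i$ by $b_i$. Substituting the expression for $\v_{n+1}$ into the wall relation $b_1\v_1+\dots+b_{n-1}\v_{n-1}+\v_n+\v_{n+1}=0$ cancels the $\v_n$ terms and leaves $(b_1+a_1)\v_1+\dots+(b_{n-1}+a_{n-1})\v_{n-1}=0$; since $\v_1,\dots,\v_{n-1}$ are part of a lattice basis and hence linearly independent, $b_i=-a_i$, which yields the stated isomorphism. The only genuine obstacle is keeping the signs consistent: one must orient $\w_\tau$ to be positive on $\sigma$ and take the difference $\uu-\uu'$ in the correct direction so the degrees come out as $+2$ and $+b_i$ rather than their negatives, and one must respect the normalization $b_n=b_{n+1}=1$ of the wall relation provided by Remark \ref{Rmk:smooth}.
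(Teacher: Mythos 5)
Your argument is correct and follows essentially the same route as the paper, which proves the proposition by exactly this computation (local splittings $\bigoplus\mathcal L_{\w_i}$ and $\bigoplus\mathcal L_{\widetilde\w_i}$, pairing of characters across the wall via $\w_\tau=\w_n$, and the identification $b_i=-a_i$ from substituting $\v_{n+1}$ into the wall relation); you merely make explicit the verification of the eigenspace decomposition and the sign bookkeeping that the paper leaves implicit. The only slight misattribution is citing Theorem \ref{Th:TestOnCurves} for the splitting itself --- that theorem concerns testing positivity on invariant curves, while the splitting into the glued line bundles $\mathcal L_{\uu,\uu'}$ comes from the discussion of \cite{hmp} in Section \ref{s:equivariantBundles}.
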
 

The above proposition together with Theorem \ref{Th:TestOnCurves} gives us  very tangible criteria for positivity of exterior powers
of the tangent bundle:
\begin{theorem}\label{Th:Criterion}
	 Let $X_\Sigma$ be a smooth toric variety of dimension $n$. Then for $1\le m\le n-1$ the exterior power 
	 $\Lambda^m\T_X$ is ample (nef) if and only if
	 for any $\tau\in\Sigma(n-1)$ with wall relation $b_1\v_1+\dots+b_{n-1}\v_{n-1}+\v_n +\v_{n+1}=0$ the inequalities
	 \begin{align*}
	 	b_{i_1}+b_{i_2}+\dots + b_{i_m} &\geoq 0\\
		2 + b_{j_1}+b_{j_2}+\dots + b_{j_{m-1}} &\geoq 0
	 \end{align*}
	 hold for each $1\le i_1<\dots<i_m\le n$ and $1\le j_1<\dots<j_{m-1} \le n$.\\
	 Similarly,  $\Lambda^n\T_X=-K_X$ is ample (nef) if and only if for any $\tau\in\Sigma(n-1)$ the coefficients 
	 in the corresponding wall relation satisfy
	 $$
	 	2 + b_{1}+b_{2}+\dots + b_{{n-1}} \geoq 0.
	 $$
	 
\end{theorem}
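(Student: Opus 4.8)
The plan is to combine the splitting computation of the preceding Proposition with the curve-testing criterion of Theorem~\ref{Th:TestOnCurves}. Since $X$ is smooth, $\T_X$ is locally free, so $\Lambda^m\T_X$ is again an equivariant vector bundle (exterior powers of equivariant bundles carry an induced $T$-action). Thus Theorem~\ref{Th:TestOnCurves} applies and reduces the problem to checking nefness (resp. ampleness) of the restriction $(\Lambda^m\T_X)|_C$ for every torus-invariant curve $C$. On a complete simplicial toric variety these curves are exactly the $V(\tau)$ with $\tau\in\Sigma(n-1)$, so it suffices to analyze one wall at a time, and to run the resulting condition over all walls.

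Fix such a wall $\tau$ with wall relation $b_1\v_1+\dots+b_{n-1}\v_{n-1}+\v_n+\v_{n+1}=0$. First I would note that restriction commutes with exterior powers, so $(\Lambda^m\T_X)|_C\cong\Lambda^m(\T_X|_C)$, and then invoke the Proposition to write $\T_X|_C\cong\O_{\P^1}(2)\oplus\O_{\P^1}(b_1)\oplus\dots\oplus\O_{\P^1}(b_{n-1})$. For a bundle on $\P^1$ that splits as a direct sum $\bigoplus_i\O_{\P^1}(d_i)$, its $m$-th exterior power splits as $\bigoplus_{|S|=m}\O_{\P^1}\!\big(\sum_{i\in S}d_i\big)$, the sum running over $m$-element subsets $S$ of the index set. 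Applying this to the splitting above, the summands of $\Lambda^m(\T_X|_C)$ are precisely the line bundles whose degrees are the $m$-fold partial sums of $\{2,b_1,\dots,b_{n-1}\}$.

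The final step is purely combinatorial bookkeeping. Since $\O_{\P^1}(d)$ is nef iff $d\ge0$ and ample iff $d>0$, and a direct sum of line bundles on $\P^1$ is nef (ample) iff every summand is, the restriction $(\Lambda^m\T_X)|_C$ is nef (ample) iff every $m$-fold partial sum of $\{2,b_1,\dots,b_{n-1}\}$ is $\geoq 0$. Splitting these subset sums according to whether or not the distinguished summand of degree $2$ is selected yields exactly the two families of inequalities in the statement: the sums $b_{i_1}+\dots+b_{i_m}$ arise when the degree-$2$ factor is not chosen, and the sums $2+b_{j_1}+\dots+b_{j_{m-1}}$ arise when it is chosen. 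Running this over all walls $\tau$ and using Theorem~\ref{Th:TestOnCurves} in both directions gives the claimed equivalence for $1\le m\le n-1$. The case $m=n$ is the special case $\Lambda^n\T_X=\det\T_X=-K_X$, where $\Lambda^n(\T_X|_C)$ is the single line bundle of degree $2+b_1+\dots+b_{n-1}$, producing the last inequality.

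I expect the point requiring most care to be the bookkeeping rather than the geometry: one must make sure the indexing of the partial sums matches the statement, so that the two families correspond exactly to the ``degree-$2$ factor excluded'' and ``degree-$2$ factor included'' cases with no sum omitted or double-counted, and one should record at the outset that $\Lambda^m\T_X$ is equivariant so that Theorem~\ref{Th:TestOnCurves} is legitimately applicable and that restriction commutes with $\Lambda^m$. The genuine geometric inputs, namely the splitting of $\T_X|_C$ and the curve-testing criterion, are already in hand, so no further algebraic geometry is needed beyond assembling these pieces.
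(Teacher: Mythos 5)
Your proposal is correct and follows exactly the route the paper intends: the paper derives Theorem~\ref{Th:Criterion} directly from the Proposition giving $\T_X|_C\cong\O_{\P^1}(2)\oplus\O_{\P^1}(b_1)\oplus\dots\oplus\O_{\P^1}(b_{n-1})$ together with Theorem~\ref{Th:TestOnCurves}, with the same subset-sum computation for $\Lambda^m$ of a split bundle on $\P^1$. Your explicit attention to the equivariance of $\Lambda^m\T_X$ and the compatibility of restriction with exterior powers only makes the argument more careful than the paper's terse presentation.
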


\begin{remark}\label{rm:TXnef}
	(1) In  case $m=1$ the criterion yields that  $\T_X$ is ample if and only if all $b_i$ are positive or, in the terminology of 
	\cite{CLS}, $J_-=\emptyset, J=J_+$ for each invariant extremal curve $C$.  By Proposition \ref{Th:cls},  this is the case if and 
	only it the extremal contraction corresponding to $C$ is of fiber type with image of dimension zero. Thus we recover 
	the well known result that $X=\P^n$. 
	
	(2) On the other hand, for $\T_X$ to be nef, it suffices (and is  necessary) for all $b_i$ in wall relations to be non-negative. As the Mori cone cone $\NE(X)$ is spanned by extremal curves, this is equivalent to all extremal contractions being of fiber type. In fact, it is known in general that a non-singular variety $X$ with 
	$\T_X$ nef  admits only fiber type extremal contractions, since the deformations of any rational 
	curve cover all of $X$ in this case (see \cite[Theorem 2.2]{CP91}). In general, the inverse 
	implication is false (take for example the product $\P^1\times X$ for a non-singular variety $X$ of Picard number 1 with
	$\T_X$ not nef). 
	
	A result  by Fujino and Sato (\cite[Proposition 5.3]{FS1}) states that a smooth toric 
	variety admitting only fiber type contractions is isomorphic to a product of projective 
	spaces, providing a full classification of smooth toric varieties with nef tangent bundle.
\end{remark}
\begin{example} Let $X$ be the blowup of $\P^3$ in a line, i.e., the toric variety corresponding to the 
 fan in $N=\Z^3$ obtained as follows. Take the standard fan generated by the points $\v_1=(1,0,0)$, $\v_2=(0,1,0)$, $\v_3=(0,0,1)$, and $\v_4=(-1,-1,-1)$, corresponding to $\P^3$ and add the ray $\v_5=(-1,-1,0)$  subdividing the maximal cone $<\v_1,\v_3,\v_4>$ by the two maximal cones $<\v_1,\v_3,\v_5>$, and $<\v_1,\v_4,\v_5>$ and the maximal cone $<\v_2,\v_3,\v_4>$ by the two maximal cones $<\v_2,\v_3,\v_5>$, and $<\v_2,\v_4,\v_5>$ . We then get the following wall relations (and equivalent ones by symmetry)
 \begin{align*} 
	\v_3+\v_4+\v_1+\v_2 &= 0\qquad\mbox{(not extremal)}\\
	\v_1+\v_5+\v_2+0\cdot\v_3&=0\qquad\mbox{(pencil of planes through blown-up line)}\\
 	\v_3+\v_4-\v_5+0\cdot\v_1 &= 0\qquad\mbox{(blow-up morphism)}.\\
\end{align*}
Hence, Theorem \ref{Th:Criterion} yields that $\T_X$ and $\Lambda^2\T_X$ are neither ample nor nef, whereas $\Lambda^3\T_X=-K_X$ is  ample.

This example can be extended to blow-ups $\phi:X\to \P^n$ of a $k$-dimensional linear
 subspace. As above, the only relation containing a negative coefficient corresponds to the blow-up morphism and it has exactly $k$ zero entries. Hence $\Lambda^m\T_X$ is nef if and only $m$ is at least $k+2$.

\end{example}
Let us state an easy consequence of the ampleness/nefness criterion, which is not at all obvious in general. 
\begin{cor}
	Let $X$ be a non-singular toric variety with $\Lambda^m\T_X$ nef for some $m<n$. Then $X$ is Fano.
\end{cor}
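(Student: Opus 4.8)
The plan is to reduce the statement, via Theorem \ref{Th:Criterion}, to a single numerical inequality on every wall relation. Recall that the last part of Theorem \ref{Th:Criterion} (the case $m=n$) says that $-K_X=\Lambda^n\T_X$ is ample, i.e. $X$ is Fano, precisely when every wall $\tau\in\Sigma(n-1)$ with wall relation $b_1\v_1+\dots+b_{n-1}\v_{n-1}+\v_n+\v_{n+1}=0$ satisfies $2+b_1+\dots+b_{n-1}>0$. So I fix an arbitrary such $\tau$, set $S=b_1+\dots+b_{n-1}$, and aim to show $S\ge 0$, which already forces $2+S\ge 2>0$. Note that since $m<n$, i.e. $m\le n-1$, there are at least $m$ of the coefficients $b_1,\dots,b_{n-1}$, so the first family of inequalities in Theorem \ref{Th:Criterion} (those of the form $b_{i_1}+\dots+b_{i_m}\ge 0$, coming from the nefness of $\Lambda^m\T_X$) is nonvacuous; and because these inequalities range over \emph{all} $m$-element subsets, I may reorder the $b_i$ freely and assume $b_1\le b_2\le\dots\le b_{n-1}$.

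First I would bound the number of negative coefficients: if $m$ of the $b_i$ were strictly negative, applying the first inequality to precisely those $m$ indices would give a sum of $m$ negative numbers that is $\ge 0$, a contradiction. Hence at most $m-1$ of the $b_i$ are negative. After the sorting above, the negative coefficients can therefore occupy only the first $m-1$ slots, so $b_m,b_{m+1},\dots,b_{n-1}\ge 0$. Now split $S=(b_1+\dots+b_m)+(b_{m+1}+\dots+b_{n-1})$: the first bracket is exactly the $m$-fold sum of the $m$ smallest coefficients and is $\ge 0$ by the nefness inequality, while the second bracket is a sum of non-negative terms. Thus $S\ge 0$, giving $2+S\ge 2>0$. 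Since $\tau$ was arbitrary, the ampleness criterion for $-K_X$ in Theorem \ref{Th:Criterion} holds, and $X$ is Fano.

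The argument is short because Theorem \ref{Th:Criterion} does the real work; the only content is the combinatorial observation that the nef hypothesis on $m$-fold sums simultaneously caps the number of negative $b_i$ at $m-1$ and forces the $m$ smallest of them to have non-negative sum, and these two facts together pin down $S\ge 0$. I do not expect a genuine obstacle here—no analytic input is needed—so the point requiring the most care is purely bookkeeping: making sure $m\le n-1$ so that an $m$-element subset of $\set{b_1,\dots,b_{n-1}}$ exists, and tracking strictness. It is worth flagging two features that explain why the corollary looks surprising: the conclusion uses \emph{only} the first family of inequalities (the second family, involving the summand $2$, is never invoked), and although $\Lambda^m\T_X$ is merely assumed \emph{nef} (non-strict inequalities), the resulting bound $2+S\ge 2$ is strict, so one obtains \emph{ampleness} of $-K_X$ rather than just nefness.
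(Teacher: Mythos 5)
Your proof is correct and takes essentially the same route as the paper: the paper also sorts the coefficients so that $b_1\le\dots\le b_{n-1}$ and concludes via the chain $-K_X\cdot C = 2+\sum_{i=1}^{n-1}b_i > \sum_{i=1}^{m}b_i\ge 0$, which is exactly your decomposition of $S$ into the $m$ smallest coefficients plus a non-negative tail. Your explicit count that at most $m-1$ of the $b_i$ are negative (hence $b_m,\dots,b_{n-1}\ge 0$) merely spells out what the paper leaves implicit.
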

\begin{proof}
	Let $C=V(\tau)$ be an invariant curve with corresponding extremal relation  
	$b_1\v_1+\dots+b_{n-1}\v_{n-1}+\v_n +\v_{n+1}=0$. We may assume the $b_i$ to be ordered such that 
	$b_i \le b_{i+1}$ for $i=1,\dots,n-2$. Then by the nefness of $\Lambda^m\T_X$ we have
	$$
		-K_X\cdot C = 2 + \sum_{i=1}^{n-1} b_i > \sum_{i=1}^m b_i \ge 0.
	$$ 
\end{proof}

\section{Classification}

In order to  classify smooth  toric varieties with $\Lambda^2\T_X$ nef (or $\Lambda^3\T_X$ ample), we will 
argue inductively by applying toric extremal contractions, thus decreasing the Picard number. We show that by this process  we 
arrive eventually at a smooth toric variety $Y$ which admits only fiber type extremal contractions. 
By Remark \ref{rm:TXnef}, this is equivalent to  $T_Y$ being nef which in turn means that $Y$ is a  product of 
projective spaces.

\begin{remark}
The case of Picard number one is easy in the toric situation as opposed to the general setting as the only 
smooth toric varieties with Picard number one are projective spaces. This can either be seen directly by fan considerations 
(see \cite[Exercise 7.3.10]{CLS}), or we might use the following argument.
If a non-singular projective toric variety $X$ has Picard number $b_2(X)=1$, then  the dual Euler sequence
$$
	0 \to \mbox{Pic}(X)\otimes \O_X \to \bigoplus_\rho \O_X(D_\rho) \to \T_X \to 0 
$$
(see \cite[Theorem 8.1.6]{CLS}) realizes $\T_X$ as a quotient of the ample vector bundle $\bigoplus\O_X(D_\rho)$ 
proving it is ample. Therefore, $X\cong \P^n$ by Mori's famous result.  
\end{remark}

\subsection{$\Lambda^2\T_X$ nef}
The elementary result for the classification is the following
\begin{theorem}\label{th:L2} 
	Let $f:X\to Y$ be an extremal contraction of a nonsingular toric variety of dimension $n\ge3$ 
	with $\Lambda^2\T_X$ nef. 
	Then either $f$ is of fiber type, or $f$ is a  divisorial 
	contraction whose exceptional locus has zero-dimensional image and $Y$ is smooth with $\Lambda^2\T_Y$ nef.
	In the latter case, $f$ consists of blowups of invariant points. 
\end{theorem}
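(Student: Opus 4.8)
The plan is to determine the type of $f$ from the signs of the coefficients in the wall relation of a contracted invariant curve, using Proposition~\ref{Th:cls}, and then to recognise the surviving divisorial case as an honest blowup by pinning down the length of the extremal ray via Theorem~\ref{Th:FS}. Let $\mathcal R$ be the extremal ray defining $f$ and let $C=V(\tau)$ be an invariant curve with $[C]\in\mathcal R$ and wall relation $b_1\v_1+\dots+b_{n-1}\v_{n-1}+\v_n+\v_{n+1}=0$, so that $\T_X|_C\cong\O_{\P^1}(2)\oplus\O_{\P^1}(b_1)\oplus\dots\oplus\O_{\P^1}(b_{n-1})$. The two distinguished coefficients equal $+1$, so any negative coefficient occurs among $b_1,\dots,b_{n-1}$; and nefness of $\Lambda^2\T_X$ forces $b_i+b_j\ge 0$ for all $i\ne j$ by Theorem~\ref{Th:Criterion}. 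Hence at most one $b_i$ is negative, i.e.\ $|J_-|\le 1$, and Proposition~\ref{Th:cls} rules out small contractions. Thus $f$ is of fiber type or divisorial.

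In the divisorial case $|J_-|=1$, say $b_1<0$. For every $j\ne 1$ the inequality $b_1+b_j\ge 0$ gives $b_j\ge -b_1>0$, so all remaining coefficients are strictly positive; therefore $|J_+|=n$ and $|J_0|=0$, and by Proposition~\ref{Th:cls} the image of the exceptional divisor has dimension $n-|J_+|=0$.

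To identify $f$ I would compute the length of $\mathcal R$. By Theorem~\ref{Th:FS} there is an invariant curve $C_0=V(\tau_0)$ with $[C_0]\in\mathcal R$ realising $\ell(\mathcal R)\le n-1$; its wall relation again has a single negative coefficient $-c$ with $c\ge 1$ and the other $n-2$ wall coefficients $\ge c$, whence
$$-K_X\cdot C_0 = 2+\sum_{i=1}^{n-1}b_i \ge 2-c+(n-2)c = 2+(n-3)c \ge n-1,$$
using $c\ge 1$ and $n\ge 3$. Therefore $\ell(\mathcal R)=n-1$ and we are in the equality case of Theorem~\ref{Th:FS}, so $f$ is a weighted blowup of a smooth invariant point with weight $(1,a,\dots,a)$. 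A short computation of cone multiplicities shows such a weighted blowup is smooth exactly when $a=1$; since $X$ is smooth, $a=1$ and $f$ is the ordinary blowup of one or several smooth invariant points, one for each connected component of the exceptional locus. As the centres are smooth points of $Y$ and $f$ is an isomorphism elsewhere, $Y$ is smooth.

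It remains to check $\Lambda^2\T_Y$ is nef, which I would do by comparing wall relations across $f$. At a blown-up cone $\sigma_Y=\pos(\v_1,\dots,\v_n)$ the blowup inserts the ray $\v_0=\v_1+\dots+\v_n$. Walls of $Y$ not meeting $\sigma_Y$ are walls of $X$ with identical relations, so the criterion of Theorem~\ref{Th:Criterion} passes down unchanged. For a wall of $Y$ that is a facet of $\sigma_Y$, substituting $\v_0=\v_1+\dots+\v_n$ into the corresponding relation on $X$ shows the $X$-coefficients $\beta_i$ and the $Y$-coefficients $b_i^Y$ satisfy $\beta_i=b_i^Y-1$; the inequalities $\beta_i+\beta_j\ge 0$ and $2+\beta_i\ge 0$ coming from $\Lambda^2\T_X$ nef then give $b_i^Y+b_j^Y\ge 2$ and $2+b_i^Y\ge 1$, which are stronger than required, so $\Lambda^2\T_Y$ is nef. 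The main obstacle is the length step: one must squeeze $\ell(\mathcal R)$ to exactly $n-1$ from both sides and invoke the equality clause of Theorem~\ref{Th:FS}, because a genuinely weighted blowup ($a>1$) with singular $Y$ is locally compatible with $X$ smooth, and only the length identity excludes it.
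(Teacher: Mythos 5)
Your proposal is correct, and its first half (ruling out small contractions via $b_i+b_j\ge 0$, identifying the divisorial case as having $J_0=\emptyset$ and zero-dimensional image, and pinning $\ell(\mathcal R)=n-1$ to land in the equality clause of Theorem \ref{Th:FS}) is essentially the paper's argument, just organized as a single lower bound $-K_X\cdot C\ge 2+(n-3)c\ge n-1$ rather than the paper's case split $b_1=-1$ versus $b_1=-2$; the paper additionally observes directly that $b_1=-1$ makes $\v_2,\dots,\v_n,\v_{n+1}$ a lattice basis, giving smoothness of $Y$ without invoking the equality clause. The genuine divergence is the last step: the paper disposes of the nefness of $\Lambda^2\T_Y$ in one line via the relative tangent sequence, i.e.\ by treating $f^\ast\Lambda^2\T_Y$ as a quotient of $\Lambda^2\T_X$, whereas you verify it directly by tracking how wall relations transform under the blowup ($\beta_j=b_j^Y-1$ on facets of the subdivided cone, unchanged elsewhere). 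Your route costs a short fan computation but is the more robust one: for a blowup the natural map $\T_X\to f^\ast\T_Y$ fails to be surjective along the exceptional divisor, so the quotient argument needs justification that the paper does not supply, while your coefficient comparison gives the required inequalities (in fact strictly stronger ones, $b_i^Y+b_j^Y\ge 2$) with no such issue. Two small points to tidy up: your dichotomy for walls of $\Sigma_Y$ should also cover a wall shared by two blown-up cones (the substitution then gives $\beta_j=b_j^Y-2$, and the inequalities only improve), and your closing remark that a weighted blowup with $a>1$ is ``locally compatible with $X$ smooth'' contradicts your own multiplicity computation --- it is precisely the smoothness of $X$, not only the length identity, that forces $a=1$ once the equality clause applies.
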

\begin{proof}
	 We may assume $f$ to be of relative Picard number one. 
	 Let $b_1\v_1+\dots+b_{n-1}\v_{n-1}+\v_n +\v_{n+1}=0$  be the extremal relation corresponding 
	 to $f$, ordered such that $b_i\le b_{i+1}$ for $i=1,\dots,n-2$. Then by Theorem \ref{Th:Criterion}  
	 we have
	 $$
	 	b_1+b_2\ge0, \qquad b_1 \ge -2.
	 $$
	 Therefore, either $b_1$ and hence all $b_i$ are non-negative and $f$ is of fiber type, or 
	 $b_1$ is negative, and the remaining $b_i$ are non-negative, i.e., $J_0=\emptyset$. By 
	 Proposition \ref{Th:cls}, in the latter case $f$ is a divisorial contraction with 
	 image of the exceptional locus of dimension zero.
	 
	 Why is $Y$ smooth in this case? If $b_1=1$, then
	 $\v_2,\dots\v_n,\v_{n+1}$ form a lattice  basis. Therefore, $Y$ is nonsingular. 
	 If on the other hand $b_1=-2$ then all other $b_i$ are at least $2$, so 
	 $-K_X\cdot C\ge 2(n-2)\ge n-1$, which is impossible by Theorem \ref{Th:FS}. 
	 In fact, the only possibility left by this result  is $b_1=-1$, and $b_i=1$ for $i=2,\dots, n-1$, i.e.,
	 $-K_X\cdot C=n-1$. Hence, again by Theorem \ref{Th:FS}, $f$ is a weighted blowup of a smooth invariant
	 point of weight $(1,a,\dots,a)$. Since $X$ and $Y$ are smooth, $a=1$. 
	 
	 By the relative tangent sequence for $f$, we get that $f^\ast(\Lambda^2\T_Y)$ and thus $\Lambda^2\T_Y$ itself 
	 are nef. 
\end{proof}
\noindent
We are thus in the situation to argue inductively and obtain the following classification.

\begin{theorem}\label{th:class} 
	Let $X$ be a smooth toric variety of dimension $n\ge3$ with $\Lambda^2\T_X$ nef. Then either $\T_X$ is nef, or 
	$X$ is the blowup of $P^n$ in a point. 
\end{theorem}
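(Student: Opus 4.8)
The plan is to run an induction on the Picard number $\rho(X)=b_2(X)$, using Theorem \ref{th:L2} as the engine. The base case is $\rho(X)=1$: by the preceding remark the only smooth toric varieties of Picard number one are projective spaces, so $X\cong\P^n$ and $\T_X$ is ample, in particular nef. For the inductive step, assume $\rho(X)\ge 2$ and that the statement holds for all smooth toric varieties of dimension $\ge 3$ with $\Lambda^2\T$ nef and strictly smaller Picard number. Since $\rho(X)\ge 2$, the Mori cone $\NE(X)$ has an extremal ray, hence there is a nontrivial extremal contraction $f\colon X\to Y$; I would take $f$ to have relative Picard number one so that it is elementary.

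Now apply Theorem \ref{th:L2} to $f$. It tells us that $f$ is either of fiber type or a divisorial contraction that is a blowup of a smooth invariant point (with $Y$ smooth and $\Lambda^2\T_Y$ nef). I would split into these two cases. If every extremal contraction of $X$ is of fiber type, then by Remark \ref{rm:TXnef}(2) (the Fujino--Sato classification, \cite[Proposition 5.3]{FS1}) $X$ is a product of projective spaces, and a product of projective spaces has nef tangent bundle, so $\T_X$ is nef and we are in the first alternative. Otherwise some elementary contraction $f\colon X\to Y$ is the blowup of a smooth invariant point. Here $\rho(Y)=\rho(X)-1$, and $Y$ is smooth of dimension $n\ge 3$ with $\Lambda^2\T_Y$ nef, so the induction hypothesis applies to $Y$: either $\T_Y$ is nef or $Y$ is the blowup of $\P^n$ in a point.

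The decisive point is to show that once a genuine blowup of a point occurs, $X$ must be exactly $\mathrm{Bl}_{\mathrm{pt}}\P^n$, so that no further blowups can be stacked. I would argue as follows. If $\T_Y$ is nef, then $Y$ is a product of projective spaces; I would then check that blowing up a point on such a product preserves $\Lambda^2\T$-nefness only when the product is $\P^n$ itself, i.e.\ a single point blown up off any positive-dimensional projective factor creates a new wall relation violating the criterion of Theorem \ref{Th:Criterion}. Concretely, if $Y=\P^{n_1}\times\cdots\times\P^{n_k}$ with $k\ge 2$, the exceptional wall of the blowup has a relation whose coefficients include the zeros coming from directions transverse to the blown-up point within a factor, and combining a negative coefficient with such zeros produces a pair summing below $0$, contradicting $b_{i_1}+b_{i_2}\geoq 0$. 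Thus $Y\cong\P^n$ and $X=\mathrm{Bl}_{\mathrm{pt}}\P^n$. If instead $Y$ is already the blowup of $\P^n$ in a point, I would show that $X$ cannot also be smooth with $\Lambda^2\T_X$ nef: blowing up any invariant point of $\mathrm{Bl}_{\mathrm{pt}}\P^n$ introduces a wall relation (for instance involving the strict transform of a line through the two centers) whose coefficient vector again violates one of the inequalities $b_{i_1}+b_{i_2}\geoq 0$ of Theorem \ref{Th:Criterion}.

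The main obstacle I anticipate is precisely this last verification: ruling out a second point-blowup and pinning down that the only admissible product is $\P^n$. The clean way to handle it is to translate everything into wall relations via the Proposition computing $\T_X|_C$ and Theorem \ref{Th:Criterion}, and to examine the wall relations created by a point-blowup directly on the fan. A point-blowup of a smooth invariant point inserts the ray $\v_0=\v_1+\cdots+\v_n$ into a maximal cone, and the curves on the exceptional $\P^{n-1}$ together with the strict transforms of invariant lines through the center produce wall relations whose coefficients I can read off explicitly; I then check that $\Lambda^2$-nefness forces the ambient directions to all be $\P^n$-like, leaving no room for a second blowup or for an extra positive-dimensional factor. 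This reduces the whole step to a finite, combinatorial check on coefficient vectors of these explicit wall relations.
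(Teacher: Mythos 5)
Your proposal is correct and follows essentially the same route as the paper: reduce via Theorem \ref{th:L2} to the situation where repeated point blowdowns land on a product of projective spaces, then exclude proper products, second blowups, and infinitely near points by an explicit fan computation. The only cosmetic difference is that you rule out these configurations by exhibiting wall relations (for strict transforms of invariant lines through the centers) violating the pairwise inequalities of Theorem \ref{Th:Criterion} directly, whereas the paper expresses the same combinatorial fact as the appearance of flipping contractions, which are forbidden by Theorem \ref{th:L2}.
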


\begin{proof}
	If $X$ has Picard number one, then $X=\P^n$, hence $\T_X$ is nef. 
	Otherwise $X$ admits  toric extremal contractions $f_i:X\to Y_i$. 
	If these are all of fiber type, then by Remark \ref{rm:TXnef} 
	$\T_X$ is nef.
	
	 Hence, if $\T_X$ is not nef, there is a birational  extremal contraction $f:X\to Y$, which by 
	 Theorem \ref{th:L2} must be the blowup of an invariant point $p\in Y$. 
	 
	 Performing all such birational extremal contractions, we obtain a toric variety $Y_0$ 
	 all of whose extremal contractions are fiber type. Therefore, $Y_0=\P^{k_1}\times\dots\times\P^{k_s}$ 
	 for $k_1+\dots+k_s=n$. Now, $X$ does not admit any flipping contractions. However, blowing up a point on 
	 an $n$-dimensional proper product of projective spaces will lead to flips. Hence, $Y_0=\P^n$. 
	 Similarly, blowing up more than one point on $\P^n$ again gives rise to flips.
\end{proof}

\subsection{$\Lambda^3\T_X$ ample}

We get analogous results in the situation when $\Lambda^3\T_X$ is ample:
\begin{theorem}\label{th:L3} 
	Let $f:X\to Y$ be an extremal contraction of a nonsingular toric variety of dimension $n\ge4$ 
	with $\Lambda^3\T_X$ ample. 	Then either $f$ is of fiber type, or $f$ is a  divisorial 
	contraction whose exceptional locus has zero-dimensional image and $Y$ is smooth with $\Lambda^3\T_Y$ ample.
	In the latter case, $f$ consists of blowups of invariant points. 
\end{theorem}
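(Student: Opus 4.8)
The plan is to follow the proof of Theorem~\ref{th:L2} in structure, replacing the numerics of $\Lambda^2$ nef by those of $\Lambda^3$ ample. After reducing to relative Picard number one, $f$ is the contraction of a single extremal ray $\mathcal R$ with wall relation $b_1\v_1+\dots+b_{n-1}\v_{n-1}+\v_n+\v_{n+1}=0$, which I order so that $b_1\le b_2\le\dots\le b_{n-1}$. Applying the ampleness clause of Theorem~\ref{Th:Criterion} for $m=3$ to the two smallest admissible subsums gives
\[
 b_1+b_2+b_3>0,\qquad 2+b_1+b_2>0,
\]
the second of which reads $b_1+b_2\ge-1$. If $b_1\ge0$ then all $b_i\ge0$, so $J_-=\emptyset$ and $f$ is of fiber type by Proposition~\ref{Th:cls}. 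From now on assume $b_1<0$, i.e. $b_1\le-1$. Since $b_1+b_2\ge-1$ this already forces $b_2\ge0$, hence $b_2,\dots,b_{n-1}\ge0$ and $|J_-|=1$, so $f$ is divisorial. Throughout I use that on a smooth toric variety a contracted invariant curve $V(\tau)$ satisfies $D_\rho\cdot V(\tau)=1$ for the two extra rays (Remark~\ref{Rmk:smooth}), so $[V(\tau)]$ is primitive in $\mathcal R$ and $-K_X\cdot V(\tau)=\ell(\mathcal R)=2+\sum_{i=1}^{n-1}b_i$; the Fujino--Sato estimate (Theorem~\ref{Th:FS}) then controls this number.

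The genuinely new point compared with the $\Lambda^2$ case is that $2+b_1+b_2>0$ is too weak to give $b_2>0$ outright, so I must separately exclude vanishing coefficients and the values $b_1\le-2$. First suppose some $b_i$ vanishes; then $b_2=0$, which together with $b_1+b_2\ge-1$ forces $b_1=-1$, while $b_1+b_2+b_3>0$ forces $b_3\ge2$ and hence $b_3,\dots,b_{n-1}\ge2$, so that $\ell\ge 1+2(n-3)=2n-5$. But a vanishing coefficient means $|J_0|\ge1$, so $|J_+|=n-|J_0|$ and the fibers of $f$ have dimension $d=|J_+|-1\le n-2$; Theorem~\ref{Th:FS} then gives $\ell<d+1\le n-1$, i.e. $\ell\le n-2$. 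Comparing with $\ell\ge 2n-5$ yields $n\le3$, contradicting $n\ge4$. Hence $J_0=\emptyset$, the image of the exceptional divisor is a point, $d=n-1$, and so $\ell\le n-1$.

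Next I exclude $b_1\le-2$. In that case $b_1+b_2\ge-1$ gives $b_2\ge1$, and $b_1+b_2+b_3>0$ gives $b_3\ge2$, so $\ell\ge 2+(b_1+b_2+b_3)+2(n-4)\ge 2n-5$; for $n\ge5$ this contradicts $\ell\le n-1$, while for $n=4$ it forces $\ell=n-1$, whereupon the equality clause of Theorem~\ref{Th:FS} identifies $f$ as a weighted blowup of weight $(1,a,\dots,a)$, which for smooth $X$ means $a=1$, i.e. an ordinary point blowup whose wall relations are all of type $(-1,1,\dots,1)$, contradicting $b_1\le-2$. Therefore $b_1=-1$, and then $\ell=1+\sum_{i=2}^{n-1}b_i\ge n-1$ together with $\ell\le n-1$ forces $b_2=\dots=b_{n-1}=1$. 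The relation now reads $\v_1=\v_2+\dots+\v_{n+1}$, so $\{\v_2,\dots,\v_n,\v_{n+1}\}$ is a lattice basis, $Y$ is smooth, and $f$ is the blowup of a smooth invariant point. As in Theorem~\ref{th:L2}, the relative tangent sequence shows that $\Lambda^3\T_Y$ is again ample; dropping the relative Picard number assumption, $f$ is a composite of such blowups.

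I expect the exclusion step to be the main obstacle. Unlike the $\Lambda^2$ situation, the wall-relation inequalities alone no longer force the nonnegative coefficients to be strictly positive, so the argument must be fed the Fujino--Sato length estimates, using the general bound $\ell<d+1$ to dispose of vanishing coefficients and the divisorial bound $\ell\le n-1$ to pin down $b_1$. It is only the sharp equality case of Theorem~\ref{Th:FS}, combined with the smoothness of $X$, that disposes of the borderline configuration $b_1\le-2$ in dimension $n=4$; once the contraction is recognised as an ordinary point blowup, the smoothness of $Y$ and the ampleness of $\Lambda^3\T_Y$ follow exactly as in the $\Lambda^2$ case.
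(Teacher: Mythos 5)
Your proof is correct and follows essentially the same route as the paper: the two inequalities $b_1+b_2+b_3>0$ and $2+b_1+b_2>0$ from Theorem~\ref{Th:Criterion}, the Fujino--Sato length estimates to rule out vanishing coefficients (via $d\le n-2$) and then to pin down $b_1=-1$, $b_2=\dots=b_{n-1}=1$, and the lattice-basis argument for the smoothness of $Y$. If anything, you carry out the exclusion of $b_1\le-2$ (which the paper delegates to ``the same argument as in Theorem~\ref{th:L2}'' even though the numerics there are slightly different, since here one only gets $b_2\ge1$ rather than $b_2\ge2$) more explicitly than the paper does.
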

\begin{proof}
	 This is a  variation of the proof of Theorem \ref{th:L2}.
	 We may assume $f$ to be of relative Picard number one. 
	 Let $b_1\v_1+\dots+b_{n-1}\v_{n-1}+\v_n +\v_{n+1}=0$  be the extremal relation corresponding 
	 to $f$, ordered such that $b_i\le b_{i+1}$ for $i=1,\dots,n-2$. Then by Theorem \ref{Th:Criterion}  
	 we have
	 $$
	 	b_1+b_2 +b_3 > 0, \qquad b_1 + b_2 > -2.
	 $$
	 By the second inequality, the only possibly negative $b_i$ is $b_1$. Therefore,  $X$ does not admit flipping
	 contractions. Furthermore, if $b_1<0$, then either $b_2$, and hence all remaining $b_i$, are positive, or $b_2=0$ and 
	 $b_3>-b_1$.  However, in the latter case we have $-K_X\cdot C\ge 2+b_2+(n-3)(-b_2+1) = 2 +1 +(n-4)(-b_1+1) \ge n-1$
	 with equality if and only if $n=4$. This is impossible by Theorem \ref{Th:FS}. Thus if $b_1<0$, then  $J_0=\emptyset$ and 
	 $f$ is thus a divisorial contraction with image of the exceptional locus of dimension zero. 
	 By the same argument as in the proof of Theorem \ref{th:L2}, $f$ must in fact be the blowup of a smooth invariant point. 
\end{proof}

With the above result established, the  proof of Theorem \ref{th:class},
\emph{mutatis mutandis}, proves the following.

\begin{theorem}
	Let $X$ be a smooth toric variety of dimension $n\ge4$ with $\Lambda^3\T_X$ ample. 
	Then either $\T_X$ is nef, or 
	$X$ is the blowup of $P^n$ in a point. 
\end{theorem}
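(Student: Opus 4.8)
The plan is to run the argument of Theorem \ref{th:class} almost verbatim, with the single substitution of Theorem \ref{th:L3} for Theorem \ref{th:L2} wherever the local structure of a birational extremal contraction is needed, and with $\Lambda^2\T_X$ nef replaced throughout by $\Lambda^3\T_X$ ample and the dimension hypothesis strengthened to $n\ge4$. The backbone is an induction on the Picard number $\rho(X)=b_2(X)$, the inductive step being that ampleness of $\Lambda^3\T_X$ descends to the target of every elementary contraction, as recorded in Theorem \ref{th:L3}.

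First I would treat the base case $\rho(X)=1$: a smooth toric variety of Picard number one is $\P^n$, whose tangent bundle is even ample, so the first alternative holds. For $\rho(X)>1$ I would look at the extremal contractions $f_i\colon X\to Y_i$. If all of them are of fiber type, then Remark \ref{rm:TXnef} gives that $\T_X$ is nef and we are again in the first alternative. Otherwise there is a birational extremal contraction $f\colon X\to Y$, and Theorem \ref{th:L3} forces $f$ to be the blowup of a smooth invariant point, with $Y$ smooth and $\Lambda^3\T_Y$ again ample. Iterating, each such contraction lowers $\rho$ while preserving the hypothesis, so after finitely many steps we reach a smooth toric variety $Y_0$ all of whose contractions are of fiber type; by the Fujino--Sato result quoted in Remark \ref{rm:TXnef} this means $Y_0\cong\P^{k_1}\times\dots\times\P^{k_s}$ with $\sum k_j=n$.

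It remains to pin down $Y_0$ and the number of blown-up points. The key leverage is that ampleness of $\Lambda^3\T_X$ forbids flipping contractions on $X$: the criterion of Theorem \ref{Th:Criterion} yields $2+b_1+b_2>0$ for every wall relation (ordered so that $b_1\le b_2\le\cdots$), which already rules out two negative coefficients and hence any small contraction, exactly as exploited in the proof of Theorem \ref{th:L3}. I would then observe that blowing up an invariant point of a \emph{proper} product $\P^{k_1}\times\dots\times\P^{k_s}$ creates a small contraction, forcing $Y_0=\P^n$, and that blowing up more than one invariant point of $\P^n$ likewise creates flips; hence $X$ is the blowup of $\P^n$ in a single point.

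The one genuinely nontrivial ingredient---though it is the same one already used in the $\Lambda^2$ classification---is this flip-creation claim: one must check combinatorially that point-blowups on honest products, or a second point-blowup on $\P^n$, produce a wall relation with two strictly negative coefficients. Granting that, the remainder of the proof is bookkeeping of the induction and transport of positivity through the relative tangent sequence, both of which are routine once Theorem \ref{th:L3} is in hand.
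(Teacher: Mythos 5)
Your proposal is correct and matches the paper's argument exactly: the paper proves this theorem by the single sentence that the proof of Theorem \ref{th:class} applies \emph{mutatis mutandis} once Theorem \ref{th:L3} is established, which is precisely the substitution you carry out, including the use of the inequality $2+b_1+b_2>0$ to exclude flips and the same unproved-in-detail combinatorial claim that point-blowups of proper products (or a second point-blowup of $\P^n$) create flipping contractions.
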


\subsection{Higher exterior powers}
Weakening the assumptions to positivity of  higher exterior powers of $\T_X$, Theorem \ref{Th:Criterion} leads to weaker 
inequalities regarding
the $b_i$ in extremal relations. We thus get fewer restrictions on the possible toric birational morphisms $X\to W$. For
example, assuming $\Lambda^3\T_X$ nef, the extremal relation with $\vec{b}=(-2,1,\dots,1)$ is permitted by the inequalities
of Theorem \ref{Th:Criterion}. On the other hand, its corresponding curve $C$ has $-K_XC=n-2$ which does not violate  
Theorem \ref{Th:FS}. Therefore, we cannot exclude the possibility that some contractions might have singular image $Y$ in this situation. 
In particular, $\T_Y$ will not be locally free if this happens.
Dealing with isolated singularities arising in this way still is possible with an inductive approach. However, taking even higher exterior 
powers forces us to deal with flipping contractions. For these cases, which arise  starting at $\Lambda^4\T_X$ nef (e.g., for $\vec{b}
=(-1,-1,1,\dots, 1))$, a different  approach will be necessary as $\Sigma_Y$ will not even be simplicial.




\footnotesize

   \bigskip
   David Schmitz,
   Mathematisches Institut,
   Universität Bayreuth,
  D-95440 Bayreuth
  
   \nopagebreak
   \textit{E-mail address:} \texttt{schmitzd@mathematik.uni-marburg.de}


\end{document}